\documentclass[final,1p,times,authoryear]{elsarticle}
\usepackage[T1]{fontenc}
\usepackage{amsmath}
\usepackage{amsthm}
\usepackage{amssymb}
\usepackage{amsfonts}
\usepackage{listings}
\usepackage{natbib}

\makeatletter

\numberwithin{equation}{section}
\numberwithin{figure}{section}
\theoremstyle{plain}
\newtheorem{thm}{\protect\theoremname}
\theoremstyle{remark}

\theoremstyle{definition}
\newtheorem{defn}[thm]{\protect\definitionname}
\theoremstyle{plain}
\newtheorem{lem}[thm]{\protect\lemmaname}
\theoremstyle{definition}
\newtheorem{example}[thm]{\protect\examplename}
\theoremstyle{remark}
\newtheorem*{rem*}{\protect\remarkname}

\newcommand{\B}{\mathcal{B}}

\newcommand{\U}{\mathcal{U}}
\newcommand{\V}{\mathcal{V}}
\newcommand{\Hc}{\mathcal{H}}
\newcommand{\C}{\mathcal{C}}
\newcommand{\F}{\mathcal{F}}
\newcommand{\G}{\mathcal{G}}
\makeatother

\providecommand{\definitionname}{Definition}
\providecommand{\examplename}{Example}
\providecommand{\lemmaname}{Lemma}
\providecommand{\remarkname}{Remark}
\providecommand{\theoremname}{Theorem}

\bibliographystyle{elsarticle-harv}

\begin{document}
\title{Analytic Continuation for Multiple Zeta Values using Symbolic Representations}

\author[DAL]{Lin Jiu}
\ead{lin.jiu@dal.ca}

\author[Maryland]{Tanay Wakhare}
\ead{twakhare@gmail.com}

\author[Tulane,SudOrsay]{Christophe Vignat}
\corref{mycorrespondingauthor}
\cortext[mycorrespondingauthor]{Corresponding author}
\ead{cvignat@tulane.edu}

\address[DAL]{Department of Mathematics and Statistics, 
Dalhousie University, 
6316 Coburg Road, Halifax, NS, Canada B3H 4R2}
\address[Maryland]{University of Maryland, College Park, MD 20742, USA}
\address[Tulane]{Department of Mathematics, Tulane University, New Orleans, LA 70118, USA}
\address[SudOrsay]{LSS/Supelec, Universit\'{e} Paris Sud 11, Orsay, 91192, France}

\begin{keyword}
Multiple Zeta Values, Symbolic Computation, Harmonic Sums, Analytic Continuation
\MSC[2010] 11M32 \sep  05A40  \sep 32D99 
\end{keyword}

\begin{abstract}
We introduce a symbolic representation of $r$-fold harmonic sums
at negative indices. This representation
allows us to recover and extend some recent results by Duchamp et
al., such as recurrence relations and generating functions for these
sums. This approach is also applied to the study of the family of extended
Bernoulli polynomials, which appear in the computation of harmonic
sums at negative indices. It also allows us to reinterpret the Raabe analytic continuation
of the multiple zeta function as both a constant term extension of Faulhaber's formula,
and as the result of a natural renormalization procedure for Faulhaber's formula.
\end{abstract}

\maketitle

\section{Introduction}

Faulhaber's classical formula
\[
\sum_{k=1}^{N}k^{n}=\frac{B_{n+1}(N+1)-B_{n+1}}{n+1},\thinspace\thinspace n\ge0,\thinspace\thinspace N\ge1,
\]
exemplifies the importance of Bernoulli polynomials $B_{n}(x)$
in various summation problems. In a recent article \citep{Duchamp},
Duchamp et al.~proposed an extension of this formula to different
types of sums such as the multiple nested sums
\begin{equation}\label{intro}
H_{-n_{1},\dots,-n_{r}}(N) := \sum_{N>i_{1}>\cdots>i_{r}>0}i_{1}^{n_{1}}\cdots i_{r}^{n_{r}},
\end{equation}
and their polynomial version, the truncated polylogarithmic function at negative indices
\begin{equation}\label{poly}
\mathrm{Li}_{-n_{1},\ldots,-n_{r}}(z;N):=\sum_{N>i_{1}>\cdots>i_{r}>0}i_{1}^{n_{1}}\cdots i_{r}^{n_{r}}z^{i_{1}}.
\end{equation}
Their results highlight interesting combinatorial aspects of these
nested sums, together with a natural generalization of the usual Bernoulli
polynomials, called extended Bernoulli polynomials. 

Obviously, both sums \eqref{intro} and \eqref{poly} are related to the \textit{multiple zeta values (MZVs)}
\[
\zeta({s_1,s_2,\ldots,s_r}) = \sum_{n_1>n_2>\cdots>n_r>0} \frac{1}{n_1^{s_1}n_2^{s_2}\cdots n_r^{s_r}},
\]
the study of which is also the ultimate aim for this work. In a previous work \citep{Moll}, we exhibited a natural symbolic approach
to the analytic continuation of multiple zeta values at negative integers. Since $\zeta(s_1,s_2,\ldots,s_r)$ involves multiple variables, \textit{Hartog's phenomenon} indicates that there may be multiple analytic coninuations, in constrast with the one dimensional case. However, we showed that a simple symbolic computation rule
allows us to express in a simple way the analytic continuation of the multiple zeta values
as obtained by Sadaoui \citep[Thm.~1]{Sadaoui} using Raabe's identity. Furthermore, the symbolic approach allowed us to discover that, surprisingly, Raabe's
analytic continuation method generates the same values for the multiple
zeta function at negative integers as those obtained from the Euler-Maclaurin
summation formula \citep[eq.~6]{EulerMaclaurin}. Though these may or may not be identical analytic continuations, the fact that they agree at non-positive integer points is surprising.

In this paper, we demonstrate two more appearances of the same values for the analytic continuation of the multiple zeta function; one is as the constant term of an extension of Faulhaber's formula to nested sums of the form \eqref{intro}, and the other is as the result of a natural renormalization procedure applied to \eqref{intro} in the $N\to \infty$ limit. Although Hartog's phenomenon suggests multiple possible analytic continuations, we wish to highlight the natural appearance of the Raabe--type analytic continuation (again!) as a motivation for its further study.

Encouraged by the simplification that symbolic computation may bring
to the manipulation of complicated sums, in this article we first revisit 
the approach by Duchamp et al.~in order to gain insight on the
significance of the generalized Faulhaber formula and the extended
Bernoulli polynomials that naturally emerge from it. Minh \citep{Minh} has 
obtained results which are similar to ours, though non-symbolic. One of the major
results of the present study is the simple product representation of the multiple nested sums
\[
H_{-n_{1},\dots,-n_{r}}(N)=\prod_{k=1}^{r}\Hc_{1,\ldots,k}^{n_{k}}
\]
from Theorem~\ref{thm:Hgeneralcase}, where the symbols $\Hc_{1,\ldots,k}^{n_{k}}$
are defined below. This simple product representation allows us in
turn to prove several consequences, such as that in Theorem~\ref{thm:The-generating-function} the recurrence
\[
\F_{r}(w_{1},\dots,w_{r};N)=\frac{\F_{r-1}(w_{1},\dots,w_{r-1}+w_{r};N)-\F_{r-1}(w_{1},\dots,w_{r-1};N)}{e^{w_{r}}-1}
\]
satisfied by 
the generating function for the harmonic sums
\[
\F_{r}(w_{1},\dots,w_{r};N):=\sum_{n_{1},\dots,n_{r}=0}^{\infty}\frac{w_{1}^{n_{1}}\cdots w_{r}^{n_{r}}}{n_{1}!\cdots n_{r}!}H_{-n_{1},\dots,-n_{r}}(N)
\]
and
other relations and recurrences for nested harmonic sums. Moreover, these symbolic representations allow us to handle complicated multiple sums related to the analytic continuation of the multiple zeta function.

\section{\label{sec:Symbols}Symbols}

\subsection{The $\B$ and $\U$ symbols}

In what follows, we will frequently use the Bernoulli symbol $\B$
with the evaluation rule
\[
\B^{n}=B_{n},
\]
where $B_{n}$ is the $n$-th Bernoulli number, defined by the generating
function
\[
\sum_{n=0}^{\infty}\frac{B_{n}}{n!}z^{n}=\frac{z}{e^{z}-1}.
\]
Given this definition, we have
\begin{equation}
e^{z\B}=\frac{z}{e^{z}-1},\label{eq:BernoulliGF}
\end{equation}
since
\[
e^{z\B}=\sum_{n=0}^{\infty}\frac{\B^{n}}{n!}z^{n}=\sum_{n=0}^{\infty}\frac{B_{n}}{n!}z^{n}.
\]
Moreover, the Bernoulli polynomials $B_{n}(x)$ with generating
function
\[
\sum_{n=0}^{\infty}\frac{B_{n}(x)}{n!}z^{n}=\frac{z}{e^{z}-1}e^{zx}
\]
then have the simple symbolic expression
\begin{equation}
B_{n}(x)=(\B+x)^{n},\label{eq:BernoulliPolynomial}
\end{equation}
since 
\[
B_{n}(x)=\sum_{k=0}^{n}{n \choose k}B_{n-k}x^{k}=\sum_{k=0}^{n}{n \choose k}\B^{n-k}x^{k}=(\B+x)^{n}.
\]
Additionally, two symbols $\B_{1}$ and $\B_{2}$
are called \emph{independent }if they satisfy, $\forall n,m\in\mathbb{N}$,
\[
\B_{1}^{n}\B_{2}^{m}=B_{n}B_{m}.
\]
Another useful symbol is the uniform symbol $\U$, with the
evaluation rule
\[
\U^{n}=\frac{1}{n+1}.
\]
This is equivalent to the operational action $$f(x+\U) = \int_0^1 f(x+u)\mathrm{d}u.$$
We deduce, for example, 
\[
e^{z\U}=\sum_{n=0}^\infty\frac{z^n\U^n}{n!}=\sum_{n=0}^\infty\frac{z^n}{(n+1)!}=\frac{e^{z}-1}{z}.
\]

Finally, from the identity
\[
e^{z(\U+\B)}=e^{z\U}e^{z\B}=\frac{e^z-1}{z} \cdot\frac{z}{e^z-1}=1,
\]
we deduce that both symbols $\B$ and $\U$ annihilate
each other, in the sense 
\[
(x+\B+\U)^{n}=x^{n}
\]
for an arbitrary positive integer $n$, an identity that extends by
linearity to any polynomial $P$:
\begin{equation}
P(x+\B+\U)=P(x).\label{eq:Cancellation}
\end{equation}

Our main objects of study, the multiple harmonic numbers, are defined
as the truncated multiple zeta value
\[
H_{n_{1},\ldots,n_{r}}(N)=\sum_{N>i_{1}>\cdots>i_{r}>0}\frac{1}{i_{1}^{n_{1}}\cdots i_{r}^{n_{r}}},
\]
and we focus here on their values at negative indices, i.e., the multiple
power sums $H_{-n_{1},\ldots,-n_{r}}(N)$, defined in \eqref{intro}.
In particular, we shall show that these multiple power sums 
can be expressed in two equivalent ways using two new symbols which are described next. 

\subsection{The $\Hc$ symbol}

For the single harmonic sum, define, for $N\in\mathbb{N}$,
the symbol $\Hc$ by 
\[
\left(\Hc(N)\right)^{n}=H_{-n}(N)=1^{n}+2^{n}+\cdots+(N-1)^{n}.
\]
Since the Bernoulli polynomials satisfy the identity
\[
B_{n+1}(x+1)-B_{n+1}(x)=(n+1)x^{n},
\]
we have 
\begin{eqnarray}
\left(\Hc(N)\right)^{n} & = & \frac{1}{n+1}\sum_{k=0}^{N-1}(n+1)k^{n}\label{eq:Faulhaber}\\
 & = & \frac{1}{n+1}\sum_{k=0}^{N-1}\left(B_{n+1}(k+1)-B_{n+1}(k)\right)\nonumber \\
 & = & \frac{1}{n+1}\left(B_{n+1}(N)-B_{n+1}\right)\nonumber \\
 & = & \frac{\left(\B+N\right)^{n+1}-\B^{n+1}}{n+1}\nonumber \\
 & = & \frac{1}{n+1}\sum_{k=1}^{n+1}{n+1 \choose k}B_{n+1-k}N^{k},\nonumber 
\end{eqnarray}
which extends naturally to the case $\left(\Hc(x)\right)^{n}$
for $x\not\in\mathbb{N}$, as the famous Faulhaber formula.

An extension to bivariate power sums can be found in \citep[Thm.~1]{Duchamp}:
with $p=n+m+2$, 
\[
H_{-n,-m}(N)=\sum_{k=0}^{m}\ \sum_{l=0}^{p-1-k}\ \sum_{q=0}^{p-k-l}\frac{B_{k}B_{l}}{(m+1)(p-k)}{m+1 \choose k}{p-k \choose l}{p-k-l \choose q}(N-1)^{q}.
\]
This complicated triple-sum formula suggests the introduction of symbolic computation as follows: considering two independent Bernoulli symbols
$\B_{1}$ and $\B_{2}$ and summing over $q$, this
identity can be simplified as
\begin{eqnarray*}
H_{-n,-m}(N) & = & \sum_{k=0}^{m}\sum_{l=0}^{p-1-k}\frac{\B_{2}^{k}\B_{1}^{l}}{(m+1)(p-k)}{m+1 \choose k}{p-k \choose l}N^{p-k-l}\\
 & = & \frac{1}{m+1}\sum_{k=0}^{m}{m+1 \choose k}\B_{2}^{k}\frac{\left(\B_{1}+N\right)^{p-k}-\B_{1}^{p-k}}{p-k}.
\end{eqnarray*}
Therefore, using Faulhaber's formula \eqref{eq:Faulhaber}, we obtain
\begin{equation}
H_{-n,-m}(N)=\frac{1}{m+1}\sum_{k=0}^{m}{m+1 \choose k}\B_{2}^{k}\left(\Hc(N)\right)^{p-k-1}.\label{eq:Depth2Recurrence}
\end{equation}
We further deduce, by denoting 
\[
\Hc_{1}=\Hc(N),\thinspace\thinspace\thinspace\thinspace\Hc_{1,2}=\Hc\left(\Hc_{1}\right),
\]
that
\[
H_{-n,-m}(N)=\Hc_{1}^{n}(N)\cdot\frac{\left(\B_{2}+\Hc_{1}\right)^{m+1}-\B_{2}^{m+1}}{m+1}=\Hc_{1}^{n}\cdot\Hc_{1,2}^{m}.
\]
The general case is given in Theorem~\ref{thm:Hgeneralcase} below.

\subsection{The $\V$ symbol}

In order to obtain another symbolic expression,
we first consider the double sum case case, i.e., 
\[
H_{-n,-m}(N)=\sum_{N>j>i>0}i^{m}j^{n}.
\]
Introduce the ``uniform over $[0,z]$'' symbol $\V(z)$
as follows: for any polynomial $P$, 
\begin{equation}
P\left(x+\V(z)\right)=\int_{0}^{z}P(x+v)\mathrm{d}v\label{eq:Vj}
\end{equation}
so that 
\[
P\left(x+\V(z)\right)=Q(x+z)-Q(x),
\]
where $Q$ is an antiderivative of $P$. By Faulhaber's formula \eqref{eq:Faulhaber},
the inner sum (to which we have added the null term corresponding
to $i=0$) in $H_{-n,-m}(N)$ reads 
\begin{equation}
\sum_{i=0}^{j-1}i^{m}=\frac{\left(\B_{2}+j\right)^{m+1}-\left(\B_{2}+0\right)^{m+1}}{m+1}=\left(\B_{2}+\V(j)\right)^{m}.\label{eq:SingleV}
\end{equation}
Thus, 
\[
H_{-n,-m}(N)=\sum_{j=1}^{N-1}\left(\sum_{i=0}^{j-1}i^{m}\right)j^{n}=\sum_{j=1}^{N-1}j^{n}\left(\B_{2}+\V(j)\right)^{m}.
\]
Using the binomial formula yields
\[
H_{-n,-m}(N)=\sum_{j=1}^{N-1}j^{n}\sum_{k=0}^{m}{m \choose k}\B_{2}^{m-k}\left(\V(j)\right)^{k}=\sum_{k=0}^{m}{m \choose k}\B_{2}^{m-k}\left(\sum_{j=1}^{N-1}j^{n}\left(\V(j)\right)^{k}\right).
\]
Choosing $P(x)=x^{k}$, $z=j$ in \eqref{eq:Vj} and then
evaluating at $x=0$, we obtain
\[
\left(\V(j)\right)^{k}=\int_{0}^{j}v^{k}\mathrm{d}v=\frac{j^{k+1}}{k+1},
\]
so that
\[
\sum_{j=1}^{N-1}j^{n}\left(\V(j)\right)^{k}=\sum_{j=1}^{N-1}\frac{j^{n+k+1}}{k+1}=\frac{\left(\B_{1}+\V(N)\right)^{n+1+k}}{k+1}.
\]
Therefore, substitution yields 
\[
H_{-n,-m}(N)=\frac{1}{m+1}\sum_{k=0}^{m}{m+1 \choose k+1}\B_{2}^{m-k}\left(\B_{1}+\V(N)\right)^{n+1+k},
\]
where we have used the identity
\[
\frac{1}{m+1}\binom{m+1}{k+1}=\frac{1}{k+1}\binom{m}{k}.
\]
Reindexing the sum ($l=k+1$) gives
\begin{align*}
H_{-n,-m}(N) & =\frac{\left(\B_{1}+\V(N)\right)^{n}}{m+1}\sum_{l=1}^{m+1}{m+1 \choose l}\B_{2}^{m+1-l}\left(\B_{1}+\V(N)\right)^{l}\allowdisplaybreaks\\
 & =\left(\B_{1}+\V(N)\right)^{n}\frac{\left(\B_{2}+\B_{1}+\V(N)\right)^{m+1}-\B_{2}^{m+1}}{m+1}
\end{align*}
so that, defining the nested symbols 
\[
\V_{1}=\V(N)\text{ and }\V_{1,2}=\V\left(\B_{1}+\V_{1}\right),
\]
we obtain
\begin{equation}
\label{H-n-m}
H_{-n,-m}(N)=\left(\B_{1}+\V_{1}\right)^{n}\left(\B_{2}+\V_{1,2}\right)^{m}=B_n\left( \V_{1} \right)B_m\left( \V_{1,2} \right).
\end{equation}

The general case is provided in Theorem~\ref{thm:Hgeneralcase} below.

\subsection{General polynomial version}

The former results extend naturally to polynomials as follows: given
$P$ and $Q$ two polynomials without constant terms
\[
P(x)=\sum_{k=1}^{m}a_{k}x^{k}\text{ and }Q(x)=\sum_{l=1}^{n}b_{l}x^{l},
\]
a polynomial version of identity \eqref{H-n-m} reads
\begin{align*}
\sum_{N>j>i>0}P(j)Q(i) & =\sum_{k=1}^{m}\sum_{l=1}^{n}\sum_{N>j>i>0}a_{k}b_{l}i^{l}j^{k}\\
 & =\sum_{k=1}^{m}\sum_{l=1}^{n}a_{k}b_{l}\Hc_{1}^{k}\Hc_{1,2}^{l}\\
 & =\left(\sum_{k=1}^{m}a_{k}\Hc_{1}^{k}\right)\left(\sum_{l=1}^{n}b_{l}\Hc_{1,2}^{l}\right)\\
 & =P\left(\Hc_{1}\right)Q\left(\Hc_{1,2}\right),
\end{align*}
or equivalently
\[
\sum_{N>j>i>0}P(j)Q(i)=P\left(\B_{1}+\V_{1}\right)Q\left(\B_{2}+\V_{1,2}\right).
\]

\section{\label{sec:MainResults}Multiple power sums}

\subsection{Symbolic expression of multiple power sums}
\begin{thm}
\label{thm:Hgeneralcase}The $r$-fold multiple power sums \eqref{intro} can be expressed as
\begin{equation}
H_{-n_{1},\dots,-n_{r}}(N)=\prod_{k=1}^{r}\Hc_{1,\ldots,k}^{n_{k}},\label{eq:HGeneralCase}
\end{equation}
where $\Hc_{1}=\Hc(N)$ and recursively
$\Hc_{1,\ldots k}=\Hc\left(\Hc_{1,\ldots,k-1}\right)$, for $k=2,3,\ldots,r$. Equivalently, defining
\begin{equation}
\V_{1}=\V(N)\text{ and }\V_{1,\dots,k}=\V\left(\B_{k-1}+\V_{1,\dots,k-1}\right)\label{eq:NestedVSymbol}
\end{equation}
produces
\begin{equation}
H_{-n_{1},\dots,-n_{r}}(N)=\prod_{k=1}^{r}\left(\B_{k}+\V_{1,\dots,k}\right)^{n_{k}}.\label{eq:RfoldHarmonicSum}
\end{equation}
These identities extend to the case of $r$ polynomials $P_{1},\dots,P_{r}$ without
constant terms as follows
\[
\sum_{N>i_{1}>\cdots>i_{r}>0}P_{1}(i_{1})\dots P_{r}(i_{r})=\prod_{k=1}^{r}P_{k}\left(\Hc_{1,\dots,k}\right)=\prod_{k=1}^{r}P_{k}\left(\B_{k}+\V_{1,\ldots,k}\right).
\]
\end{thm}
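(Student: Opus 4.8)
The plan is to prove the theorem by induction on the depth $r$, peeling off the innermost sum at each step and using the single-variable Faulhaber formula \eqref{eq:Faulhaber} as the engine. The base case $r=1$ is exactly the definition of the $\Hc$ symbol, namely $\left(\Hc(N)\right)^{n}=H_{-n}(N)$, while the case $r=2$ has already been established explicitly above as identity \eqref{H-n-m}. So the real content is the inductive step.

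For the inductive step on the $\Hc$ representation \eqref{eq:HGeneralCase}, I would write
\[
H_{-n_{1},\dots,-n_{r}}(N)=\sum_{j=1}^{N-1}j^{n_{1}}\sum_{N>j>i_{2}>\cdots>i_{r}>0}i_{2}^{n_{2}}\cdots i_{r}^{n_{r}},
\]
recognizing the inner sum as a depth-$(r-1)$ harmonic sum truncated at $j$, i.e.\ $H_{-n_{2},\dots,-n_{r}}(j)$. By the induction hypothesis this inner sum equals $\prod_{k=2}^{r}\left(\Hc_{2,\ldots,k}(j)\right)^{n_{k}}$, where each nested symbol is built starting from $\Hc_{2}=\Hc(j)$. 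The outer sum $\sum_{j=1}^{N-1}j^{n_{1}}(\cdots)$ then applies the single-variable Faulhaber formula once more: summing a polynomial in $j$ against $j^{n_{1}}$ over $1\le j\le N-1$ replaces $j$ by the symbol $\Hc_{1}=\Hc(N)$ and shifts every nested symbol's innermost seed from $j$ to $\Hc_{1}$, yielding exactly $\prod_{k=1}^{r}\Hc_{1,\ldots,k}^{n_{k}}$. The key mechanism is that nesting $\Hc$ over $\Hc_{1}$ commutes correctly with the recursive definition $\Hc_{1,\ldots,k}=\Hc(\Hc_{1,\ldots,k-1})$.

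The equivalent $\V$ representation \eqref{eq:RfoldHarmonicSum} follows by the same induction, now using \eqref{eq:SingleV}, which expresses the innermost sum $\sum_{i=0}^{j-1}i^{m}=\left(\B+\V(j)\right)^{m}$, together with the recursive definition \eqref{eq:NestedVSymbol} of $\V_{1,\dots,k}$. Alternatively, I would simply observe that \eqref{eq:HGeneralCase} and \eqref{eq:RfoldHarmonicSum} are two encodings of the same quantity: the identity $\Hc(x)^{n}=\left(\B+\V(x)\right)^{n}$, which is precisely Faulhaber's formula \eqref{eq:Faulhaber} read through \eqref{eq:SingleV}, lets me translate each factor $\Hc_{1,\ldots,k}^{n_{k}}$ into $\left(\B_{k}+\V_{1,\ldots,k}\right)^{n_{k}}$ once I check that the two recursive nesting schemes produce matching arguments. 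Finally, the polynomial extension is immediate: writing $P_{k}(x)=\sum_{j}a_{j}^{(k)}x^{j}$ with no constant term, expanding the multilinear product of sums, and applying the monomial identity termwise, linearity collapses the result into $\prod_{k=1}^{r}P_{k}\left(\Hc_{1,\ldots,k}\right)$, exactly as demonstrated above in the depth-$2$ polynomial computation.

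The main obstacle I anticipate is bookkeeping the nesting of symbols under reindexing: I must verify that the inner depth-$(r-1)$ sum, which is naturally expressed in symbols seeded at $j$, transforms correctly when the outer Faulhaber summation replaces $j$ by $\Hc_{1}$. Concretely, I need the compatibility statement that summing $\sum_{j=1}^{N-1}j^{n_{1}}\,f(\Hc(j))$ for a polynomial $f$ equals $\Hc_{1}^{n_{1}}\,f(\Hc_{1,2})$-type expressions, i.e.\ that the seed substitution $j\mapsto\Hc_{1}$ propagates through all levels of nesting simultaneously. Making this substitution rigorous — rather than merely formal — requires care, since the symbols are defined only through their moment evaluation rules; the cleanest route is to prove the commutation at the level of generating functions or to justify the seed replacement by the polynomial identity $\sum_{j=1}^{N-1}j^{n}P(j)=P\!\left(\Hc(N)\right)\cdot(\text{shift})$ applied coefficientwise, so that no convergence or analytic subtlety arises.
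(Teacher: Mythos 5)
Your proposal is correct and follows essentially the same route as the paper: induction on the depth $r$, peeling off the outermost summation variable, invoking the induction hypothesis for the inner depth-$(r-1)$ sum, expanding it as a polynomial in that variable without constant term, and applying Faulhaber's formula termwise so that the seed is replaced by $\Hc_{1}$ (equivalently $\B_{1}+\V(N)$) throughout the nesting. The ``main obstacle'' you flag --- justifying the seed substitution coefficientwise --- is precisely what the paper's proof carries out via the expansion $\sum_{k}a_{k}\left(\B_{2}+\V(i_{1})\right)^{k}$, so no gap remains.
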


\begin{rem*} 
{\ }
\begin{enumerate}
\item Both identities \eqref{eq:HGeneralCase} and \eqref{eq:RfoldHarmonicSum} still hold
when $N$ is a real number. 
\item Comparing \eqref{eq:HGeneralCase} and \eqref{eq:RfoldHarmonicSum}
shows that the symbols $\Hc_{1,\ldots,k}$ and $\V_{1,\ldots,k}$
are related as
\[
\Hc_{1,\ldots,k}=\B_{k}+\V_{1,\ldots,k}.
\]
 
\item The computation rules for both symbols $\Hc_{1,\ldots,k}$
and $\V_{1,\ldots,k}$ are reminiscent of the chain rule
for differentiation
\[
\frac{\mathrm{d}}{\mathrm{d}x}\left(f_{r}\circ\cdots\circ f_{1}(x)\right)=f_{r}'\left(f_{r-1}\circ\cdots\circ f_{1}\right)f'_{r-1}\left(f_{r-2}\circ\cdots\circ f_{1}\right)\cdots f'_{1}(x).
\]
\item This method also applies to the sums
\[
S_{-n_{1},\ldots,-n_{r}}(N)=\sum_{N\geq i_{1}\geq\cdots\geq i_{r}\geq1}i_{1}^{n_{1}}\cdots i_{r}^{n_{r}}
\]
and yields 
\[
S_{-n_{1},\ldots,-n_{r}}(N)=\prod_{k=1}^{r}\bar{\Hc}_{1,\ldots,k}^{n_{k}},
\]
where the symbol $\bar{\Hc}$ is now defined by $\bar{\Hc}_{1}=\Hc(N+1)$ and recursively
$\bar{\Hc}_{1,\ldots k}=\Hc\left(\bar{\Hc}_{1,\ldots,k-1}+1\right)$, for $k=2,3,\ldots,r$.
\end{enumerate}
\end{rem*}

\begin{proof}
It suffices to show \eqref{eq:RfoldHarmonicSum} by induction, since
\eqref{eq:HGeneralCase} is similar. From the definition \eqref{intro},
the recurrence 
\begin{equation}
H_{-n_{1},\dots,-n_{r}}(N)=\sum_{N>i_{1}>r-1}H_{-n_{2},\dots,-n_{r-1}}(i_{1})i_{1}^{n_{1}}\label{eq:Recurrence}
\end{equation}
holds. Note that when $N<r$, $H_{-n_{1},\dots,-n_{r}}(N)=0$,
since the indices cannot satisfy the relation 
\[
r>N>i_{1}>\cdots>i_{r}>0,
\]
so that the sum is empty. Therefore, we can further extend the summation
range of $i_{1}$ to $1\leq i_{1}\leq N-1$. Also, the identity
\eqref{eq:RfoldHarmonicSum} 
can be expressed, by shifting the indices, as
\[
H_{-n_{2},\dots,-n_{r-1}}(i_{1})=\prod_{k=2}^{r}\left(\B_{k}+\V'_{2,\dots,k}\right)^{n_{k}},
\]
where 
\begin{equation}
\V'_{2}=\V(i_{1})\text{ and recursively for }k=2,3,\ldots,r,\ \ \V'_{2,\ldots,k}=\V\left(\B_{k-1}+\V_{2,\ldots,k-1}\right).\label{eq:ShiftedVSymbol}
\end{equation}
In order to evaluate $H_{-n_{1},\dots,-n_{r}}(N)$, we expand the
nested $\V$ symbols to obtain, using \eqref{eq:SingleV},
a polynomial in the variable $\B_{2}+\V(i_{1})$
with coefficients $a_{k}$ that depend on the remaining symbols $\B_{3},\ldots,\B_{r}$, as
\begin{equation}
H_{-n_{2},\dots,-n_{r-1}}(i_{1})=\sum_{k=0}^{d}a_{k}\left(\B_{2}+\V(i_{1})\right)^{k}=\sum_{k=0}^{d}a_{k}\sum_{l=1}^{k+1}\binom{k+1}{l}\B_{2}^{k+1-l}i_{1}^{l}.\label{eq:Inductive}
\end{equation}
Thus, 
\begin{eqnarray*}
H_{-n_{1},\dots,-n_{r}}(N) & = & \sum_{i_{1}=1}^{N-1}\sum_{k=0}^{d}a_{k}\sum_{l=1}^{k+1}\binom{k+1}{l}\B_{2}^{k+1-l}i_{1}^{n_{1}+l}\\
 & = & \sum_{k=0}^{d}a_{k}\sum_{l=1}^{k+1}\binom{k+1}{l}\B_{2}^{k+1-l}\left(\sum_{i_{1}=1}^{N-1}i_{1}^{n_{1}+l}\right)\\
 & = & \sum_{k=0}^{d}a_{k}\sum_{l=1}^{k+1}\binom{k+1}{l}\B_{2}^{k+1-l}\left(\B_{1}+\V(N)\right)^{n_{1}+l}\\
 & = & \left(\B_{1}+\V(N)\right)^{n_{1}}\sum_{k=0}^{d}a_{k}\sum_{l=1}^{k+1}\binom{k+1}{l}\B_{2}^{k+1-l}\left(\B_{1}+\V(N)\right)^{l}\\
 & = & \left(\B_{1}+\V_{1}\right)^{n_{1}}\sum_{k=0}^{d}a_{k}\left(\B_{2}+\V_{1,2}\right)^{k}\\
 & = & \left(\B_{1}+\V_{1}\right)^{n_{1}}\prod_{k=2}^{r}\left(\B_{k}+\V{}_{1,2,\dots,k}\right)^{n_{k}}\allowdisplaybreaks\\
 & = & \prod_{k=1}^{r}\left(\B_{k}+\V{}_{1,2,\dots,k}\right)^{n_{k}},
\end{eqnarray*}
as desired.
\end{proof}

\subsection{Polylogarithmic Function}

Duchamp et al.~\citep{Duchamp} also considered the truncated polylogarithmic function
\[
\mathrm{Li}_{n_1,\ldots,n_r}(z;N):=\sum_{N>i_1>\cdots>i_r>0}\frac{z^{i_1}}{i_1^{n_1}\cdots i_r^{n_r}},
\]
and its values at negative indices as expressed by \eqref{poly}. 
An analog of our previous theorem is now derived for such truncated multiple polylogarithms. 
We first recall the definition of the Apostol-Bernoulli polynomials $B_{n}\left(x|\lambda\right)$ \citep{Apostol}, with generating function
\[
\frac{t}{\lambda e^{t}-1}e^{xt}=\sum_{n=0}^{\infty}B_{n}\left(x|\lambda\right)\frac{t^{n}}{n!}.
\]
These polynomials satisfy an extended version of Faulhaber's identity,  namely 
\[
\sum_{j=1}^{m-1}\lambda^{j}j^{n}=\frac{\lambda^{m}B_{n+1}\left(m|\lambda\right)-B_{n+1}\left(0|\lambda\right)}{n+1}.
\]
Notice that in the case $\lambda=1$, this is the usual Faulhaber identity, and the Apostol Bernoulli polynomials reduce to the Bernoulli polynomials.
Let us introduce the symbol $\mathcal{A}_{\lambda}$ defined by 
\[
\left(\mathcal{A}_{\lambda}+x\right)^{n}=\lambda^{x}B_{n}\left(x|\lambda\right),
\]
Choosing $x=0,$ we deduce
\[
\mathcal{A}_{\lambda}^{n}=B_{n}\left(0|\lambda\right)\Rightarrow e^{\mathcal{A}_{\lambda}t}=\sum_{n=0}^{\infty}B_{n}\left(0|\lambda\right)\frac{t^{n}}{n!}=\frac{t}{\lambda e^{t}-1};
\]
while  in the  general case
\[
e^{\mathcal{A}_{\lambda}t}e^{xt}=e^{\left(\mathcal{A}_{\lambda}+x\right)t}=\sum_{n=0}^{\infty}\lambda^{x}B_{n}\left(x|\lambda\right)\frac{t^{n}}{n!}=\lambda^{x}\frac{t}{\lambda e^{t}-1}e^{xt}.
\]
The main justification for introducing this symbol is the following identity
\begin{align*}
\left(\mathcal{A}_{\lambda}+\V(N)\right)^{n} & =\int_{0}^{N}\left(\mathcal{A}_{\lambda}+v\right)^{n}\mathrm{d}v\\
 & =\frac{\left(\mathcal{A}_{\lambda}+v\right)^{n+1}}{n+1}\bigg|_{v=0}^{v=N}\\
 & =\frac{\left(\mathcal{A}_{\lambda}+N\right)^{n+1}-\left(\mathcal{A}_{\lambda}\right)^{n+1}}{n+1}\\
 & =\frac{\lambda^{N}B_{n+1}\left(N|\lambda\right)-B_{n+1}\left(0|\lambda\right)}{n+1}\\
 & =\sum_{j=1}^{N-1}\lambda^{j}j^{n}.
\end{align*}

This identity produces the following symbolic expression for the truncated multiple polylogarithms.
\begin{thm}
Let $\bar{\V}_{1}:=\V(N)$,
$\bar{\V}_{1,2}=\V\left(\mathcal{A}_{z}+\bar{\V}_{1}\right)$,
and $\mathcal{\bar{V}}_{1,\ldots,k}=\V\left(\B_{k}+\mathcal{\bar{V}}_{1,\ldots,k-1}\right)$
for $k=3,4,\ldots,r$. Then, we have
\[
\mathrm{Li}_{-n_{1},\ldots,-n_{r}}(z;N)=\left(\mathcal{A}_{z}+\mathcal{\bar{V}}{}_{1}\right)^{n_{1}}\prod_{k=2}^{r}\left(\B_{k}+\mathcal{\bar{V}}{}_{1,2,\dots,k}\right)^{n_{k}}.
\]
Note that this expression coincides with \eqref{eq:RfoldHarmonicSum} \eqref{eq:RfoldHarmonicSum}, except for the replacement of $\B_{1}$ by $\mathcal{A}_{z}$. 
\end{thm}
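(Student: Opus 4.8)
The plan is to reuse the proof of Theorem~\ref{thm:Hgeneralcase} almost verbatim, changing exactly one ingredient: the outermost summation now carries the weight $z^{i_{1}}$, and it is performed with the Apostol--Bernoulli symbol $\mathcal{A}_{z}$ in place of $\B_{1}$. What makes this substitution legitimate is that $\mathcal{A}_{z}$ obeys the \emph{same} operational rule against $\V$ as $\B$ does: where Faulhaber's formula \eqref{eq:Faulhaber} gives $\left(\B+\V(N)\right)^{n}=\sum_{j=1}^{N-1}j^{n}$, the identity established just above the statement gives the weighted analog $\left(\mathcal{A}_{z}+\V(N)\right)^{n}=\sum_{j=1}^{N-1}z^{j}j^{n}$. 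Since the factor $z^{i_{1}}$ in \eqref{poly} attaches only to the outermost index, peeling off $i_{1}$ yields the weighted version of the recurrence \eqref{eq:Recurrence},
\[
\mathrm{Li}_{-n_{1},\ldots,-n_{r}}(z;N)=\sum_{i_{1}=1}^{N-1}z^{i_{1}}i_{1}^{n_{1}}\,H_{-n_{2},\ldots,-n_{r}}(i_{1}),
\]
in which the nested object is the ordinary, \emph{unweighted} harmonic power sum already governed by Theorem~\ref{thm:Hgeneralcase}.

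First I would dispose of the base case $r=1$, where the weighted Faulhaber identity gives at once
\[
\mathrm{Li}_{-n_{1}}(z;N)=\sum_{i_{1}=1}^{N-1}z^{i_{1}}i_{1}^{n_{1}}=\left(\mathcal{A}_{z}+\V(N)\right)^{n_{1}}=\left(\mathcal{A}_{z}+\bar{\V}_{1}\right)^{n_{1}}.
\]
For $r\ge2$ I would apply Theorem~\ref{thm:Hgeneralcase} to the inner sum $H_{-n_{2},\ldots,-n_{r}}(i_{1})$ and reuse the expansion \eqref{eq:Inductive}: expanding its nested $\V$ symbols writes it as a polynomial in $\B_{2}+\V(i_{1})$, and hence, via \eqref{eq:SingleV}, as a polynomial $\sum_{k}a_{k}\sum_{l}\binom{k+1}{l}\B_{2}^{k+1-l}i_{1}^{l}$ in $i_{1}$ whose coefficients $a_{k}$ involve only the remaining symbols $\B_{3},\ldots,\B_{r}$.

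The key step is then to multiply by $z^{i_{1}}i_{1}^{n_{1}}$ and sum over $i_{1}$: the inner sum $\sum_{i_{1}=1}^{N-1}z^{i_{1}}i_{1}^{n_{1}+l}$ is precisely the weighted Faulhaber sum, which collapses to $\left(\mathcal{A}_{z}+\V(N)\right)^{n_{1}+l}$. Factoring out $\left(\mathcal{A}_{z}+\V(N)\right)^{n_{1}}=\left(\mathcal{A}_{z}+\bar{\V}_{1}\right)^{n_{1}}$ leaves exactly the sum that, in the proof of Theorem~\ref{thm:Hgeneralcase}, reassembled into $\prod_{k=2}^{r}\left(\B_{k}+\V_{1,\ldots,k}\right)^{n_{k}}$; the only difference is that its innermost base point is now $\mathcal{A}_{z}+\bar{\V}_{1}$ rather than $\B_{1}+\V_{1}$, which is precisely the modification recorded in the nested recursion $\bar{\V}_{1,2}=\V\left(\mathcal{A}_{z}+\bar{\V}_{1}\right)$. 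This reproduces \eqref{eq:RfoldHarmonicSum} with $\B_{1}$ replaced by $\mathcal{A}_{z}$, as claimed.

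I expect the main obstacle to be bookkeeping rather than anything conceptual: one must verify that the $\mathcal{A}_{z}$ substitution propagates \emph{only} through the outermost factor and through the single base point $\bar{\V}_{1,2}=\V\left(\mathcal{A}_{z}+\bar{\V}_{1}\right)$, while all deeper nestings $\bar{\V}_{1,\ldots,k}$ with $k\ge3$ still close up with ordinary Bernoulli symbols $\B_{k}$. This localization is forced because $z$ enters only through $z^{i_{1}}$, so no inner summation ever sees the weight; the one place demanding care is checking that the nested $\bar{\V}$ recursion as stated matches, symbol for symbol, the objects generated by the terminal summation.
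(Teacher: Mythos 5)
Your proposal is correct and follows essentially the same route as the paper's own proof: peel off the outermost index $i_{1}$, expand the inner unweighted sum $H_{-n_{2},\dots,-n_{r}}(i_{1})$ as a polynomial in $i_{1}$ via \eqref{eq:Inductive}, evaluate $\sum_{i_{1}}z^{i_{1}}i_{1}^{n_{1}+l}$ with the Apostol--Bernoulli symbol as $\left(\mathcal{A}_{z}+\V(N)\right)^{n_{1}+l}$, and reassemble exactly as in the proof of Theorem~\ref{thm:Hgeneralcase}. Your added remarks on the base case and on why $\mathcal{A}_{z}$ propagates only through the outermost factor are consistent with, and slightly more explicit than, the paper's closing appeal to ``further simplification.''
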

\begin{proof}
By \eqref{eq:RfoldHarmonicSum},
\begin{align*}
\mathrm{Li}_{-n_{1},\ldots,-n_{r}}(z;N) & =\sum_{N>i_{1}>0}i_{1}^{n_{1}}z^{i_{1}}\left(\sum_{i_{1}>i_{2}>\cdots>i_{r}>0}i_{2}^{n_{2}}\cdots i_{r}^{n_{r}}\right)\\
 & =\sum_{N>i_{1}>0}i_{1}^{n_{1}}z^{i_{1}}H_{-n_{2},\dots,-n_{r}}(i_1)\allowdisplaybreaks\\
 & =\sum_{N>i_{1}>0}i_{1}^{n_{1}}z^{i_{1}}\prod_{k=2}^{r}\left(\B_{k}+\V'_{2,\dots,k}\right)^{n_{k}}.
\end{align*}
Using \eqref{eq:Inductive}, we have 
\begin{align*}
\mathrm{Li}_{-n_{1},\ldots,-n_{r}}(z;N) & =\sum_{N>i_{1}>0}i_{1}^{n_{1}}z^{i_{1}}\sum_{k=0}^{d}a_{k}\sum_{l=1}^{k+1}\binom{k+1}{l}\B_{2}^{k+1-l}i_{1}^{l}\\
 & =\sum_{k=0}^{d}a_{k}\sum_{l=1}^{k+1}\binom{k+1}{l}\B_{2}^{k+1-l}\left(\sum_{N>i_{1}>0}i_{1}^{n_{1}+l}z^{i_{1}}\right)\\
 & =\sum_{k=0}^{d}a_{k}\sum_{l=1}^{k+1}\binom{k+1}{l}\B_{2}^{k+1-l}\left(\mathcal{A}_{z}+\V(N)\right)^{n_{1}+l}\\
 & =\left(\mathcal{A}_{z}+\V(N)\right)^{n_{1}}\sum_{k=0}^{d}a_{k}\sum_{l=1}^{k+1}\binom{k+1}{l}\B_{2}^{k+1-l}\left(\mathcal{A}_{z}+\V(N)\right)^{l}.
\end{align*}
The proof is completed by further simplification, in the same way as in the proof of Theorem \ref{thm:Hgeneralcase}. 
\end{proof}
\begin{rem*}
For the multivariate version of these sums, a similar calculation produces 
\begin{equation}
\sum_{N>i_{1}>\cdots>i_{r}>0}i_{1}^{n_{1}}\cdots i_{r}^{n_{r}}z_{1}^{i_{1}}\cdots z_{r}^{i_{r}}=\prod_{k=1}^{r}\left(\mathcal{A}_{z_k}+\tilde{\V}_{1,\ldots,k}\right)^{n_{k}},\label{eq:MultipleLi}
\end{equation}
where $\tilde{\V}_{1}=\V(N)$ and recursively
$\tilde{\V}_{1,\ldots,k+1}=\V\left(\mathcal{A}_{z_k}+\tilde{\V}_{1,\ldots,k}\right)$,
for $k=2,3,\ldots,r$. Notice that when $z_{i}=1$, $\mathcal{A}_{z_{i}}=\B_{k}$ and identities \eqref{eq:MultipleLi} and \eqref{eq:RfoldHarmonicSum} coincide.
\end{rem*}

\subsection{Recurrence}

The symbolic representation \eqref{eq:HGeneralCase} is now exploited to  deduce the following
recurrence identity on harmonic sums.
\begin{thm}
The $r$-fold multiple power sums satisfy the recurrence
\begin{equation}
H_{-n_{1},\dots,-n_{r}}(N)=\frac{1}{n_{r}+1}\sum_{k=1}^{n_{r}+1}{n_{r}+1 \choose k}B_{n_{r}+1-k}H_{-n_{1},\dots,-n_{r-1}-k}(N).\label{eq:Recurrence1}
\end{equation}
\end{thm}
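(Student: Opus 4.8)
The plan is to isolate the final factor in the product representation \eqref{eq:HGeneralCase} and reduce it with the single-variable Faulhaber formula \eqref{eq:Faulhaber}. Starting from $H_{-n_{1},\dots,-n_{r}}(N)=\prod_{k=1}^{r}\Hc_{1,\dots,k}^{n_{k}}$, I would peel off the outermost factor $\Hc_{1,\dots,r}^{n_{r}}$ and use its defining recursion $\Hc_{1,\dots,r}=\Hc\left(\Hc_{1,\dots,r-1}\right)$ together with Faulhaber's identity $\left(\Hc(x)\right)^{n}=\frac{(\B+x)^{n+1}-\B^{n+1}}{n+1}$, taken at $x=\Hc_{1,\dots,r-1}$ with a fresh Bernoulli symbol $\B_{r}$. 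This yields
\[
\Hc_{1,\dots,r}^{n_{r}}=\frac{\left(\B_{r}+\Hc_{1,\dots,r-1}\right)^{n_{r}+1}-\B_{r}^{n_{r}+1}}{n_{r}+1}.
\]

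Next I would expand the first term by the binomial theorem. The $k=0$ contribution equals $\B_{r}^{n_{r}+1}$ and cancels exactly against the subtracted $\B_{r}^{n_{r}+1}$, leaving
\[
\Hc_{1,\dots,r}^{n_{r}}=\frac{1}{n_{r}+1}\sum_{k=1}^{n_{r}+1}\binom{n_{r}+1}{k}\B_{r}^{n_{r}+1-k}\Hc_{1,\dots,r-1}^{k}.
\]
The decisive step is then to observe that the fresh symbol $\B_{r}$ is independent of every symbol $\Hc_{1,\dots,j}$ with $j\le r-1$ (equivalently, of $\B_{1},\dots,\B_{r-1}$ and of all the nested $\V$ symbols entering the first $r-1$ factors, as made explicit by the identity $\Hc_{1,\dots,k}=\B_{k}+\V_{1,\dots,k}$ from the Remark). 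Consequently $\B_{r}^{n_{r}+1-k}$ evaluates to the scalar $B_{n_{r}+1-k}$ and factors out of the product.

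Multiplying the retained factors $\prod_{k'=1}^{r-1}\Hc_{1,\dots,k'}^{n_{k'}}$ back in and merging $\Hc_{1,\dots,r-1}^{n_{r-1}}\cdot\Hc_{1,\dots,r-1}^{k}=\Hc_{1,\dots,r-1}^{n_{r-1}+k}$, I would recognize each resulting product $\left(\prod_{k'=1}^{r-2}\Hc_{1,\dots,k'}^{n_{k'}}\right)\Hc_{1,\dots,r-1}^{n_{r-1}+k}$ as precisely the depth-$(r-1)$ power sum $H_{-n_{1},\dots,-n_{r-1}-k}(N)$ by a second application of \eqref{eq:HGeneralCase}. Collecting terms produces the claimed recurrence \eqref{eq:Recurrence1}.

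I expect the only delicate point to be this independence and evaluation of $\B_{r}$: one must be certain that the Bernoulli symbol introduced by Faulhaber's formula at the outermost level does not interact with the symbols already carried by the inner sums. Since that independence is exactly what the nested construction of $\Hc_{1,\dots,k}$ guarantees, no genuine obstacle arises, and the remaining manipulations amount to routine binomial bookkeeping and reindexing.
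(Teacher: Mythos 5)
Your proposal is correct and follows essentially the same route as the paper: peel off the outermost factor of the product representation \eqref{eq:HGeneralCase}, apply Faulhaber's formula \eqref{eq:Faulhaber} with a fresh independent Bernoulli symbol $\B_{r}$, merge the resulting power of $\Hc_{1,\dots,r-1}$ into the remaining product, and reidentify a depth-$(r-1)$ power sum. The independence point you flag is exactly what the paper relies on (tacitly) when evaluating $\B_{r}^{n_{r}+1-k}$ to $B_{n_{r}+1-k}$, so no gap remains.
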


\begin{rem*}
This identity can be seen as a multivariate generalization of identity \eqref{eq:Depth2Recurrence},
and is different from identity \eqref{eq:Recurrence}.
\end{rem*}

\begin{proof}
A straightforward computation produces
\begin{eqnarray*}
H_{-n_{1},\dots,-n_{r}}(N) & = & \prod_{l=1}^{r}\Hc_{1,\ldots,l}^{n_{l}}\\
 & = & \left(\prod_{l=1}^{r-1}\Hc_{1,\ldots,l}^{n_{l}}\right)\left(\Hc\left(\Hc_{1,\ldots,r-1}\right)\right)^{n_{r}}\\
 & = & \left(\prod_{l=1}^{r-1}\Hc_{1,\ldots,l}^{n_{l}}\right)\frac{1}{n_{r}+1}\sum_{k=1}^{n_{r}+1}{n_{r}+1 \choose k}\B_{r}^{n_{r}+1-k}\Hc_{1,\ldots,r-1}^{k}\\
 & = & \frac{1}{n_{r}+1}\sum_{k=1}^{n_{r}+1}{n_{r}+1 \choose k}\B_{r}^{n_{r}+1-k}\left(\left(\prod_{l=1}^{r-2}\Hc_{1,\ldots,l}^{n_{l}}\right)\Hc_{1,\ldots,r-1}^{n_{r-1}+k}\right)\allowdisplaybreaks\\
 & = & \frac{1}{n_{r}+1}\sum_{k=1}^{n_{r}+1}{n_{r}+1 \choose k}B_{n_{r}+1-k}H_{-n_{1},\dots,-n_{r-1}-k}(N),
\end{eqnarray*}
as desired.
\end{proof}

\subsection{Generating function}
\begin{thm}
\label{thm:The-generating-function}The generating function of the $r$-fold
harmonic  sums, defined as
\[
\F_{r}\left(w_{1},\dots,w_{r};N\right)=\sum_{n_{1},\dots,n_{r}=0}^{\infty}\frac{w_{1}^{n_{1}}\dots w_{r}^{n_{r}}}{n_{1}!\dots n_{r}!}H_{-n_{1},\dots,-n_{r}}(N),
\]
satisfies, for $n\ge2,$ the recurrence
\[
\F_{r}\left(w_{1},\dots,w_{r};N\right)=\frac{\F_{r-1}\left(w_{1},\dots,w_{r-1}+w_{r};N\right)-\F_{r-1}\left(w_{1},\dots,w_{r-1};N\right)}{e^{w_{r}}-1}
\]
with the initial value
\[
\F_{1}\left(w_{1};N\right)=\frac{e^{Nw_{1}}-1}{e^{w_{1}}-1}.
\]
\end{thm}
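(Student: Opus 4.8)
The plan is to substitute the symbolic product representation of Theorem~\ref{thm:Hgeneralcase} directly into the definition of $\F_r$ and then evaluate the resulting symbolic exponentials. Writing $H_{-n_1,\dots,-n_r}(N)=\prod_{k=1}^{r}\Hc_{1,\ldots,k}^{n_k}$ and summing the factorial-weighted monomials turns each geometric factor into an exponential of the corresponding nested symbol, after which everything reduces to the evaluation rules $e^{w\B}=w/(e^{w}-1)$ from \eqref{eq:BernoulliGF} and the $\V$-integration rule \eqref{eq:Vj}. I would dispose of the base case first and then establish the recurrence by an inner summation over the innermost index.

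For the initial value, I would note that only the single symbol $\Hc_1=\B_1+\V(N)$ occurs, so that
\[
\F_1(w_1;N)=\sum_{n_1\ge 0}\frac{(w_1\Hc_1)^{n_1}}{n_1!}=e^{w_1(\B_1+\V(N))}.
\]
Applying \eqref{eq:Vj} rewrites this as $\int_0^N e^{w_1(\B_1+v)}\,\mathrm{d}v$; factoring out $e^{w_1\B_1}=w_1/(e^{w_1}-1)$ via \eqref{eq:BernoulliGF} leaves the elementary integral $\int_0^N e^{w_1 v}\,\mathrm{d}v=(e^{Nw_1}-1)/w_1$, and the two factors combine to give exactly $(e^{Nw_1}-1)/(e^{w_1}-1)$.

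For the recurrence (valid for $r\ge 2$), the key step is to carry out the summation over $n_r$ while holding $\Hc_{1,\ldots,r-1}$ symbolic. Using the Faulhaber relation $\Hc_{1,\ldots,r}^{n_r}=\big((\B_r+\Hc_{1,\ldots,r-1})^{n_r+1}-\B_r^{n_r+1}\big)/(n_r+1)$ from \eqref{eq:Faulhaber}, absorbing $1/(n_r+1)$ into a factorial, and reindexing by $m=n_r+1$, I expect the $n_r$-series to collapse to
\[
\sum_{n_r\ge 0}\frac{w_r^{n_r}}{n_r!}\Hc_{1,\ldots,r}^{n_r}=\frac{e^{w_r\B_r}}{w_r}\big(e^{w_r\Hc_{1,\ldots,r-1}}-1\big)=\frac{e^{w_r\Hc_{1,\ldots,r-1}}-1}{e^{w_r}-1},
\]
again using $e^{w_r\B_r}=w_r/(e^{w_r}-1)$. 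Substituting this back into $\F_r$, the constant $-1$ reproduces $\F_{r-1}(w_1,\dots,w_{r-1};N)$ verbatim, while the factor $e^{w_r\Hc_{1,\ldots,r-1}}$ merges with the exponential $e^{w_{r-1}\Hc_{1,\ldots,r-1}}$ produced by the $n_{r-1}$-summation; since both are power series in the single symbol $\Hc_{1,\ldots,r-1}$, their product is $e^{(w_{r-1}+w_r)\Hc_{1,\ldots,r-1}}$, which is precisely $\F_{r-1}$ with $w_{r-1}$ replaced by $w_{r-1}+w_r$. Dividing through by $e^{w_r}-1$ then yields the stated recurrence.

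I expect the main obstacle to be the careful bookkeeping at the boundary indices $n_k=0$ together with the justification for interchanging the infinite summations with the symbolic evaluation of the nested symbols. The Faulhaber evaluation implicitly carries the $i_k=0$ contribution, so one must check that the resulting closed forms match $\F_r$ as defined rather than differing by lower-order terms; fixing the convention that each inner sum is performed with $\Hc_{1,\ldots,k}$ held symbolic and evaluated only afterwards makes both the reindexing in the $n_r$-sum and the merging of exponentials in a common symbol routine.
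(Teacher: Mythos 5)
Your proposal is correct and follows essentially the same route as the paper: both write $\F_r$ as the product $\prod_{l}e^{w_l(\B_l+\V_{1,\dots,l})}=\prod_l e^{w_l\Hc_{1,\dots,l}}$, collapse the last factor to $\bigl(e^{w_r\Hc_{1,\dots,r-1}}-1\bigr)/(e^{w_r}-1)$ via \eqref{eq:BernoulliGF}, and merge the surviving exponential with the $w_{r-1}$ factor. Your explicit reindexing of the $n_r$-sum is just the term-by-term version of the paper's direct application of the integration rule \eqref{eq:Vj} to the exponential, so the two arguments coincide.
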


\begin{proof}
Starting from
\[
\F_{r}\left(w_{1},\dots,w_{r};N\right)=\sum_{n_{1},\dots,n_{r}=0}^{\infty}\prod_{l=1}^{r}\left(\frac{w_l\left(\B_{l}+\V_{1,\dots,l}\right)}{n_l!}\right)^{n_{l}}=\prod_{l=1}^{r}e^{w_{l}\left(\B_{l}+\V_{1,\dots,l}\right)},
\]
we expand the last factor, using the integration rule  \eqref{eq:Vj} 
and the recurrence \eqref{eq:NestedVSymbol}, to obtain
\[
e^{w_{r}\left(\B_{r}+\V_{1,\dots,r}\right)}=\frac{e^{w_{r}\B_{r}}}{w_{r}}\left(e^{w_{r}\left(\B_{r-1}+\V_{1,\dots,r-1}\right)}-1\right),
\]
where, from \eqref{eq:BernoulliGF},
\[
\frac{e^{w_{r}\B_{r}}}{w_{r}}=\frac{1}{e^{w_{r}}-1}.
\]
Further simplification completes the proof, and computation of the initial value $\F_{1}$ is elementary.
\end{proof}

\section{\label{sec:EBP}Extended Bernoulli polynomials}

\subsection{Definitions}

Duchamp et al.~\citep{Duchamp} expressed the multiple power sums
in terms of extended Bernoulli polynomials with multiple indices $B_{n_{1},\dots,n_{r}}(z)$
as follows

\[
H_{-n_{1},\dots,-n_{r}}(N)=\frac{B_{n_{1}+1,\ldots,n_{r}+1}\left(N+1\right)-\sum_{k=1}^{r-1}b'_{n_{k}+1,\ldots,n_{r}+1}B_{n_{1}+1,\ldots,n_{k}+1}(N+1)}{\prod_{i=1}^{r}(n_{i}+1)}.
\]
where $b'_{n_1,\dots,n_r}$ is a sequence of numbers defined recursively (see \citep[Definition 1]{Duchamp}) and the extended Bernoulli polynomials $B_{n_{1},\ldots,n_{r}}\left(z\right)$ are defined recursively as follows.
\begin{defn}
For $z\in\mathbb{C}$, 
\begin{equation}
B_{n_{1},\ldots,n_{r}}\left( z+1 \right)=B_{n_{1},\ldots,n_{r}}\left( z \right)+n_{1}z^{n_{1}-1}B_{n_{2},\dots,n_{r}}\left( z \right)\label{eq:EBP}
\end{equation}
where the simple index polynomial $B_{n}\left( z \right)$ coincides with the usual Bernoulli polynomial
of degree $n$.
It appears that the recursive rule \eqref{eq:EBP}
does not allow to determine the value $B_{n_{1},\ldots,n_{r}}(0);$
however, this value is not needed anywhere in the forthcoming results.
Hence we define the shifted extended Bernoulli polynomials (without
constant term) as
\[
\beta_{n_{1},\dots,n_{r}}(z)=B_{n_{1},\ldots,n_{r}}(z)-B_{n_{1},\ldots,n_{r}}(0),
\]
and notice that they satisfy the same recurrence as the extended Bernoulli polynomials, namely
\begin{equation}
\beta_{n_{1},\ldots,n_{r}}(z+1)=\beta_{n_{1},\ldots,n_{r}}(z)+n_{1}z^{n_{1}-1}\beta_{n_{2},\dots,n_{r}}(z).\label{eq:BetaRecurrence}
\end{equation}
\end{defn}
In the next section, an explicit symbolic expression for these polynomials
is derived.

\subsection{Symbolic expression}

We use the following result that can be found as Lemma 3 in \citep{Duchamp}.
\begin{lem}
\label{lem:lemma9} Consider the difference equation 
\begin{equation}
\label{equation_Duchamp}
f\left( x+1 \right)-f\left( x \right)=P\left( x \right),
\end{equation}
where $P$ is a polynomial 
and $f:\mathbb{N}\rightarrow \mathbb{R}$ is an unknown function. Let
$P\left( x \right)=\sum_{j=0}^{d}a_{j}{x \choose j}$ be the decomposition
of $P$ in the polynomial basis $\left\{ {x \choose j}\right\} _{j\geq0}$.
Then the unique solution $f$  without constant term of equation \eqref{equation_Duchamp} is $f_{0}\left( x \right)=\sum_{j=0}^{d}a_{j}{x \choose j+1}$. 
\end{lem}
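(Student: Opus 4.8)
The plan is to recognize the difference equation \eqref{equation_Duchamp} as the discrete analogue of integration, and to exploit the fact that the binomial coefficient polynomials $\binom{x}{j}$ behave under the forward difference operator $\Delta f(x):=f(x+1)-f(x)$ exactly as the monomials $x^{j}/j!$ behave under differentiation. The cornerstone is the Pascal identity
\[
\binom{x+1}{j+1}-\binom{x}{j+1}=\binom{x}{j},\qquad j\ge 0,
\]
that is, $\Delta\binom{x}{j+1}=\binom{x}{j}$. This single relation is what makes $\{\binom{x}{j}\}_{j\ge 0}$ the natural basis for the problem, and it explains why $P$ is decomposed in this basis in the first place. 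I would also note at the outset that, since the degrees of the $\binom{x}{j}$ run through $0,1,2,\dots$, this family is a basis of the space of polynomials, so the decomposition of $P$ exists and is unique with $d=\deg P$.

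For existence, first I would apply $\Delta$ to the proposed solution $f_{0}(x)=\sum_{j=0}^{d}a_{j}\binom{x}{j+1}$ term by term. Using the Pascal identity on each summand collapses the shift and yields
\[
f_{0}(x+1)-f_{0}(x)=\sum_{j=0}^{d}a_{j}\left(\binom{x+1}{j+1}-\binom{x}{j+1}\right)=\sum_{j=0}^{d}a_{j}\binom{x}{j}=P(x),
\]
so $f_{0}$ solves \eqref{equation_Duchamp}. To confirm that $f_{0}$ carries no constant term, I would evaluate at $x=0$: since $j+1\ge 1$, we have $\binom{0}{j+1}=0$ for every $j\ge 0$, whence $f_{0}(0)=0$.

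For uniqueness, I would take any two solutions $f,g$ without constant term and set $h=f-g$. Then $h(x+1)-h(x)=P(x)-P(x)=0$, so $h(n+1)=h(n)$ for all $n\in\mathbb{N}$, forcing $h$ to be constant on $\mathbb{N}$; since $h(0)=f(0)-g(0)=0$, this constant is $0$ and $h\equiv 0$. Hence the solution without constant term is unique, which completes the argument.

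I do not expect a genuine obstacle here; the lemma is essentially the discrete fundamental theorem of calculus in the binomial basis. The only point requiring a little care is the interpretation of \emph{without constant term}: since the relevant solutions are polynomials, I would read it as $f(0)=0$ (a polynomial has vanishing constant term precisely when it vanishes at $0$), which is exactly the normalization that kills the one-parameter family of constant solutions of the homogeneous equation $h(x+1)=h(x)$ and thereby secures uniqueness.
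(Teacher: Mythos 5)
Your proof is correct and complete: Pascal's rule $\binom{x+1}{j+1}-\binom{x}{j+1}=\binom{x}{j}$ gives existence, $\binom{0}{j+1}=0$ gives the vanishing constant term, and the telescoping/constancy argument for the homogeneous equation gives uniqueness. The paper itself offers no proof to compare against --- it imports this statement verbatim as Lemma 3 of Duchamp et al.~--- and your argument is the standard one that the cited source uses, so nothing further is needed.
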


Before stating the general case, we provide the explicit symbolic
computation of the single-index and double-index shifted extended
Bernoulli polynomials. 
\begin{example}
For a single index, since $B_{n}(z)=\left(\B+z\right)^{n}$
is the usual Bernoulli polynomial, identity \eqref{eq:SingleV} shows
that
\[
\beta_{n}(z)=\left(\B+z\right)^{n}-\B^{n}=\sum_{k=1}^{n}{n \choose k}\B^{n-k}z^{k}=n\left(\B+\V(z)\right)^{n-1}.
\]
For the double-index case, from
\[
\beta_{m,n}(z+1)-\beta_{m,n}(z)=mz^{m-1}\beta_{n}(z),
\]
we express the polynomial on the right-hand side as 
\[
P(z)=mz^{m-1}n\left(\B+\V(z)\right)^{n-1}=mn\sum_{k=1}^{n}{n \choose k}\B^{n-k}z^{m-1+k}.
\]
The Stirling numbers of the second kind $\genfrac{\{}{\}}{0pt}{}{n}{k}$
satisfy 
\[
z^{m-1+k}=\sum_{l=0}^{m-1+k}\genfrac{\{}{\}}{0pt}{}{m-1+k}{l}l!{z \choose l},
\]
which implies
\begin{eqnarray*}
P(z)=\sum_{k=1}^{n}nm{n \choose k}\B^{n-k}\sum_{l=0}^{m-1+k}\genfrac{\{}{\}}{0pt}{}{m-1+k}{l}l!{z \choose l}
\end{eqnarray*}
and therefore, using Lemma~\ref{lem:lemma9}, 
\[
\beta_{m,n}(z)=\sum_{k=1}^{n}nm{n \choose k}\B^{n-k}\sum_{l=0}^{m-1+k}\genfrac{\{}{\}}{0pt}{}{m-1+k}{l}l!{z \choose l+1}.
\]
Now since
\[
\binom{z}{l+1}=\frac{(-1)^{l+1}}{(l+1)!}(-z)_{l+1},
\]
we use the identity \citep[Entry 52.2.33]{Hansen}
\[
\sum_{l=0}^{p}\genfrac{\{}{\}}{0pt}{}{p}{l}l!\frac{(-1)^{l+1}}{(l+1)!}(-z)_{l}=-\frac{B_{p+1}(z+1)-B_{p+1}}{(z+1)(p+1)}
\]
to obtain
\[
\sum_{l=0}^{p}\genfrac{\{}{\}}{0pt}{}{p}{l}l!\frac{(-1)^{l+1}}{(l+1)!}(-z-l)_{l+1}=\frac{B_{p+1}(z+1)-B_{p+1}}{p+1},
\]
and
\[
\sum_{l=0}^{p}\genfrac{\{}{\}}{0pt}{}{p}{l}l!\frac{(-1)^{l+1}}{(l+1)!}(-z)_{l+1}=\frac{B_{p+1}(z)-B_{p+1}}{p+1}=\left(\B+\V(z)\right)^{p}.
\]
Then, we deduce
\begin{eqnarray*}
\beta_{m,n}(z) & = & \sum_{k=1}^{n}mn{n \choose k}\B_{2}^{n-k}\left(\B_{1}+\V(z)\right)^{m+k-1}\allowdisplaybreaks\\
 & = & mn\left(\B_{1}+\V(z)\right)^{m-1}\sum_{k=1}^{n}{n \choose k}\B_{2}^{n-k}\left(\B_{1}+\V(z)\right)^{k}\\
 & = & mn\left(\B_{1}+\V(z)\right)^{m-1}\left(\B_{2}+\V\left(\B_{1}+\V(z)\right)\right)^{n-1}\\
 & = & mn\left(\B_{1}+\V(z)\right)^{m-1}\left(\B_{2}+\V_{1,2}\right)^{n-1}.
\end{eqnarray*}
\end{example}

The general case is given next.
\begin{thm}
For $n_{1},\dots,n_{r}>0,$ the shifted extended Bernoulli polynomials
are expressed symbolically as the product
\begin{equation}
\beta_{n_{1},\dots,n_{r}}(z)=\prod_{k=1}^{r}n_{k}\left(\B_{k}+\V_{1,\dots,k}\right)^{n_{k}-1}.\label{eq:beta}
\end{equation}
\end{thm}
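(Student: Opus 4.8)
The plan is to prove \eqref{eq:beta} by induction on the depth $r$, identifying the claimed product with the unique constant-term-free polynomial solution of the defining recurrence \eqref{eq:BetaRecurrence} supplied by Lemma~\ref{lem:lemma9}. Write $\Phi_{n_{1},\dots,n_{r}}(z):=\prod_{k=1}^{r}n_{k}\left(\B_{k}+\V_{1,\dots,k}\right)^{n_{k}-1}$ for the right-hand side of \eqref{eq:beta}, with $\V_{1}=\V(z)$ and the nesting \eqref{eq:NestedVSymbol}. By \eqref{eq:SingleV} each factor evaluates to a genuine polynomial in $z$, so $\Phi_{n_{1},\dots,n_{r}}$ is an ordinary polynomial. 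For the base case $r=1$, the single-index case treated in the preceding Example gives $\Phi_{n_{1}}(z)=n_{1}\left(\B+\V(z)\right)^{n_{1}-1}=B_{n_{1}}(z)-B_{n_{1}}=\beta_{n_{1}}(z)$. Hence it suffices to verify that $\Phi_{n_{1},\dots,n_{r}}$ solves \eqref{eq:BetaRecurrence} and vanishes at $z=0$.

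The decisive step is to recognize the ``tail'' of the product as $\beta_{n_{2},\dots,n_{r}}$ evaluated at a shifted argument. By the recursive construction \eqref{eq:NestedVSymbol}, the symbols $\V_{1,2},\V_{1,2,3},\dots$ entering the factors with $k\geq 2$ are obtained from the depth-$(r-1)$ nested symbols by replacing the base $\V(z)$ with $\V(\B_{1}+\V_{1})$; that is, they coincide with the depth-$(r-1)$ symbols built on the argument $\B_{1}+\V_{1}$ in place of $z$. Applying the induction hypothesis to the tuple $(n_{2},\dots,n_{r})$ therefore yields $\prod_{k=2}^{r}n_{k}\left(\B_{k}+\V_{1,\dots,k}\right)^{n_{k}-1}=\beta_{n_{2},\dots,n_{r}}\left(\B_{1}+\V_{1}\right)$, the concrete polynomial $\beta_{n_{2},\dots,n_{r}}$ evaluated at the symbol $\B_{1}+\V_{1}$. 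Consequently $\Phi_{n_{1},\dots,n_{r}}(z)=F\left(\B_{1}+\V(z)\right)$, where $F(w):=n_{1}w^{n_{1}-1}\beta_{n_{2},\dots,n_{r}}(w)$ is an ordinary polynomial.

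It then remains to difference this expression. From \eqref{eq:SingleV} one has $(\B+\V(z))^{m}=\left(B_{m+1}(z)-B_{m+1}\right)/(m+1)$, so the identity $B_{m+1}(z+1)-B_{m+1}(z)=(m+1)z^{m}$ yields the telescoping rule $(\B+\V(z+1))^{m}-(\B+\V(z))^{m}=z^{m}$. Applying this monomial by monomial to $F$ gives $\Phi_{n_{1},\dots,n_{r}}(z+1)-\Phi_{n_{1},\dots,n_{r}}(z)=F(z)=n_{1}z^{n_{1}-1}\beta_{n_{2},\dots,n_{r}}(z)$, which is exactly \eqref{eq:BetaRecurrence}. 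Since $\beta_{n_{2},\dots,n_{r}}$ has no constant term, $F$ has lowest degree at least $n_{1}\geq 1$, and $(\B+\V(0))^{m}=\left(B_{m+1}(0)-B_{m+1}\right)/(m+1)=0$ for $m\geq 1$ forces $\Phi_{n_{1},\dots,n_{r}}(0)=0$; Lemma~\ref{lem:lemma9} then identifies $\Phi_{n_{1},\dots,n_{r}}$ with $\beta_{n_{1},\dots,n_{r}}$, completing the induction. The main obstacle is the bookkeeping in the decisive step: one must check carefully that the index-shifted nested $\V$-symbols of \eqref{eq:NestedVSymbol} are compatible with substituting the symbol $\B_{1}+\V_{1}$ for the real variable $z$ in the induction hypothesis, so that the tail genuinely collapses to $\beta_{n_{2},\dots,n_{r}}(\B_{1}+\V_{1})$.
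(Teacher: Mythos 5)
Your proof is correct and follows essentially the same route as the paper's: both arguments expand the product as a polynomial in $\B_{1}+\V(z)$ whose coefficients carry the remaining symbols, recognize the tail $\prod_{k=2}^{r}n_{k}\left(\B_{k}+\V_{1,\dots,k}\right)^{n_{k}-1}$ as $\beta_{n_{2},\dots,n_{r}}$ based at the shifted argument, and verify the recurrence \eqref{eq:BetaRecurrence}. The only differences are cosmetic: you perform the differencing directly via the telescoping identity $\left(\B+\V(z+1)\right)^{m}-\left(\B+\V(z)\right)^{m}=z^{m}$ instead of the paper's ``differentiate, then substitute $z+\U$ and cancel'' device, and you make explicit the base case, the vanishing at $z=0$, and the appeal to the uniqueness in Lemma~\ref{lem:lemma9}, all of which the paper leaves implicit.
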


Comparing with \eqref{eq:RfoldHarmonicSum}, we deduce the link
\[
\beta_{n_{1},\dots,n_{r}}(z)=\left(\prod_{k=1}^{r}\frac{\partial}{\partial\B_{k}}\right)H_{-n_{1},\dots,-n_{r}}(z)
\]
between shifted extended Bernoulli polynomials and $r$-fold multiple
power sums.
\begin{proof}
It suffices to show that 
the right hand side of \eqref{eq:beta} satisfies the recurrence \eqref{eq:BetaRecurrence}. We start with the straightforward
identity for the derivative of the $\V$ symbol 
\begin{equation}
\frac{\mathrm{d}}{\mathrm{d}z}\left(\left(\B+\V(z)\right)^{n}\right)=\frac{\mathrm{d}}{\mathrm{d}z}\left(\frac{\left(\B+z\right)^{n+1}-\B^{n+1}}{n+1}\right)=\left(\B+z\right)^{n},\label{eq:derivative}
\end{equation}
which can be described as the ``differentiation replaces $\V(z)$
by $z$'' rule.

In order to compute
\[
\beta_{n_{1},\dots,n_{r}}(z)=n_{1}\cdots n_{r}\left(\B_{1}+\V_{1}\right)^{n_{1}-1}\cdots\left(\B_{r}+\V_{1,\ldots,r}\right)^{n_{r}-1},
\]
similarly to the case of multiple power sums, we expand
\begin{equation}
\beta_{n_{1},\dots,n_{r}}(z)=\sum_{k=0}^{d}a_{k}\left(\B_{1}+\V(z)\right)^{k},\label{eq:expansionBeta}
\end{equation}
a polynomial in $\B_{1}+\V(z)$
with coefficients $a_{k}$ that depend on the symbols $\B_{2},\ldots,\B_{r}$.
Thus, using \eqref{eq:derivative}, we obtain the derivative
\[
\beta'_{n_{1},\dots,n_{r}}(z)=\sum_{k=0}^{d}a_{k}\left(\B_{1}+z\right)^{k},
\]
from which, replacing $z$ by $z+\U$ and applying the cancellation
property \eqref{eq:Cancellation}, we deduce, by using \eqref{eq:expansionBeta}
and \eqref{eq:ShiftedVSymbol},
\[
\beta'_{n_{1},\dots,n_{r}}\left(z+\U\right)=\sum_{k=0}^{d}a_{k}z^{k}=n_{1}\cdots n_{r}z^{n_{1}-1}\left(\B_{2}+\V'_{2}(z)\right)^{n_{2}-1}\cdots\left(\B_{r}+\V'_{2,\ldots,r}\right)^{n_{r}-1},
\]
which produces the desired recurrence
\[
\beta_{n_{1},\dots,n_{r}}(z+1)-\beta_{n_{1},\dots,n_{r}}(z)=\beta'_{n_{1},\dots,n_{r}}\left(z+\U\right)=n_{1}z^{n_{1}-1}\beta{}_{n_{2},\dots,n_{r}}(z).\qedhere
\]
\end{proof}
\begin{rem*}
Recurrence \eqref{eq:EBP} does not determine the constant term for
all extended Bernoulli polynomials except for the single-indexed ones,
which coincide with the usual Bernoulli polynomials. Therefore, if
we alternatively define 
\[
\tilde{\beta}_{n_{1},\ldots,n_{r}}(z)=\begin{cases}
B_{n_{1}}(z), & \text{if }r=1;\\
\beta_{n_{1},\ldots,n_{r}}(z), & \text{if }r>1,
\end{cases}
\]
a similar proof produces
\[
\tilde{\beta}_{n_{1},\ldots,n_{r}}(z)=\left(\prod_{k=1}^{r-1}n_{k}\left(\B_{k}+\V_{1,\dots,k}\right)^{n_{k}-1}\right)\left(\B_{r}+\B_{r-1}+\V_{1,\ldots,r-1}\right)^{n_{r}}.
\]
\end{rem*}

\subsection{Generating function}
\begin{example}
Let us first compute the generating function of the extended Bernoulli polynomials with two indices
$\beta_{n_{1},n_{2}}(z)$ as follows: with 
\end{example}

\[
\beta_{n_{1},n_{2}}(z)=n_{1}n_{2}\left(\B_{1}+\V_{1}\right)^{n_{1}-1}\left(\B_{2}+\V_{1,2}\right)^{n_{2}-1},
\]
define the generating function
\[
\G_{2}\left(w_{1},w_{2};z\right):=\sum_{n_{1},n_{2}=0}^{\infty}\beta_{n_{1},n_{2}}(z)\frac{w_{1}^{n_{1}}w_{2}^{n_{2}}}{n_{1}!n_{2}!}.
\]
Then 
\begin{eqnarray*}
\G_{2}\left(w_{1},w_{2};z\right) & = & \sum_{n_{1},n_{2}=0}^{\infty}\left(n_{1}\left(\B_{1}+\V_{1}\right)^{n_{1}-1}n_{2}\left(\B_{2}+\V_{1,2}\right)^{n_{2}-1}\right)\frac{w_{1}^{n_{1}}w_{2}^{n_{2}}}{n_{1}!n_{2}!}\\
 & = & w_{1}e^{w_{1}\left(\B_{1}+\V_{1}\right)}\cdot w_{2}e^{w_{2}\left(\B_{2}+\V_{1,2}\right)}\\
 & = & w_{1}e^{w_{1}\left(\B_{1}+\V_{1}\right)}\cdot\left(e^{w_{2}\left(\B_{2}+\B_{1}+\V_{1}\right)}-e^{w_{2}\B_{2}}\right)\\
 & = & w_{1}e^{w\B_{2}}\left(e^{\left(w_{1}+w_{2}\right)\left(\B_{1}+\V_{1}\right)}-e^{w_{1}\left(\B_{1}+\V_{1}\right)}\right)\\
 & = & e^{w_{2}\B_{2}}\left(\frac{w_{1}}{w_{1}+w_{2}}\G_{1}\left(w_{1}+w_{2};z\right)-\G_{1}\left(w_{1};z\right)\right).
\end{eqnarray*}
where $\G_{1}\left(w_{1};z\right)$ is the generating function
of the usual shifted Bernoulli polynomials $\beta_{n}(z)=B_{n}(z)-B_{n}(0),$
which is equal to
\begin{equation}
\G_{1}\left(w_{1};z\right)=\frac{w\left(e^{zw}-1\right)}{e^{w}-1}.\label{eq:F1}
\end{equation}
The general case is as follows.
\begin{thm}
The generating function of the shifted extended Bernoulli polynomials
\[
\G_{r}\left(w_{1},\dots,w_{r};z\right)=\sum_{n_{1},\dots,n_{r}=0}^{\infty}\frac{w_{1}^{n_{1}}\dots w_{r}^{n_{2}}}{n_{1}!\dots n_{r}!}\beta_{n_{1},\dots,n_{r}}(z)
\]
satisfies the recurrence
\[
\G_{r}\left(w_{1},\dots,w_{r};z\right)=\frac{w_{r}\left(\frac{w_{r-1}}{w_{r-1}+w_{r}}\G_{r-1}\left(w_{1},\dots,w_{r-2},w_{r-1}+w_{r};z\right)-\G_{r-1}\left(w_{1},\dots,w_{r-2},w_{r-1};z\right)\right)}{e^{w_{r}}-1}
\]
with initial condition given by \eqref{eq:F1}.
\end{thm}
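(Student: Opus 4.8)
The plan is to follow the proof of Theorem~\ref{thm:The-generating-function} almost verbatim, the only genuinely new ingredient being the extra factors $n_k$ carried by the symbolic formula \eqref{eq:beta}. First I would substitute \eqref{eq:beta} into the definition of $\G_r$ and factor the multiple sum into a product of one-variable sums. For a single index the relevant generating function is
\[
\sum_{n=0}^{\infty}\frac{w^{n}}{n!}\,n\,x^{n-1}=w\,e^{wx},
\]
so each factor $n_k$ simply promotes $e^{w_k(\B_k+\V_{1,\dots,k})}$ to $w_k\,e^{w_k(\B_k+\V_{1,\dots,k})}$. This yields the clean product representation
\[
\G_r(w_1,\dots,w_r;z)=\prod_{k=1}^{r} w_k\,e^{w_k(\B_k+\V_{1,\dots,k})},
\]
which reproduces the opening lines of the $\G_2$ computation in the preceding Example.

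Next I would isolate the last factor and expand it exactly as in Theorem~\ref{thm:The-generating-function}. Using the integration rule \eqref{eq:Vj} and the recursion $\V_{1,\dots,r}=\V(\B_{r-1}+\V_{1,\dots,r-1})$ from \eqref{eq:NestedVSymbol}, one has $e^{w_r\V_{1,\dots,r}}=\bigl(e^{w_r(\B_{r-1}+\V_{1,\dots,r-1})}-1\bigr)/w_r$, whence the prefactor $w_r$ cancels and
\[
w_r\,e^{w_r(\B_r+\V_{1,\dots,r})}=e^{w_r\B_r}\Bigl(e^{w_r(\B_{r-1}+\V_{1,\dots,r-1})}-1\Bigr).
\]
The decisive step is then to absorb this bracket into the $(r-1)$-st factor $w_{r-1}\,e^{w_{r-1}(\B_{r-1}+\V_{1,\dots,r-1})}$ of the remaining product, producing the difference
\[
w_{r-1}\Bigl(e^{(w_{r-1}+w_r)(\B_{r-1}+\V_{1,\dots,r-1})}-e^{w_{r-1}(\B_{r-1}+\V_{1,\dots,r-1})}\Bigr),
\]
while the untouched factors $\prod_{k=1}^{r-2}w_k\,e^{w_k(\B_k+\V_{1,\dots,k})}$ are left in place.

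Finally I would identify each of the two resulting products with a copy of $\G_{r-1}$. The second term, together with those untouched factors, is exactly $\G_{r-1}(w_1,\dots,w_{r-2},w_{r-1};z)$. The first term equals $\G_{r-1}(w_1,\dots,w_{r-2},w_{r-1}+w_r;z)$ up to a prefactor: the natural normalization for the shifted variable is $(w_{r-1}+w_r)\,e^{(w_{r-1}+w_r)(\B_{r-1}+\V_{1,\dots,r-1})}$, whereas here the prefactor is only $w_{r-1}$, which forces the correcting factor $w_{r-1}/(w_{r-1}+w_r)$ appearing in the statement. Substituting $e^{w_r\B_r}=w_r/(e^{w_r}-1)$ from \eqref{eq:BernoulliGF} then gives the claimed recurrence, and the initial condition is the elementary generating function \eqref{eq:F1} of the shifted Bernoulli polynomials. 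The one step that demands care is this last bookkeeping of the $w_{r-1}/(w_{r-1}+w_r)$ prefactor; it is precisely the feature distinguishing the present recurrence from the simpler one satisfied by $\F_r$, and getting it right is the crux of the argument.
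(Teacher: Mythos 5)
Your proposal is correct and follows essentially the same route as the paper's own proof: substitute \eqref{eq:beta} to get the product $\prod_{k}w_{k}e^{w_{k}(\B_{k}+\V_{1,\dots,k})}$, expand the last factor via the integration rule to $e^{w_{r}\B_{r}}\bigl(e^{w_{r}(\B_{r-1}+\V_{1,\dots,r-1})}-1\bigr)$, absorb it into the $(r-1)$-st factor, and track the $w_{r-1}/(w_{r-1}+w_{r})$ normalization. No discrepancies to report.
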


\begin{proof}
Using \eqref{eq:beta}, we have
\[
\G_{r}\left(w_{1},\dots,w_{r};z\right)=\sum_{n_{1},\dots,n_{r}=0}^{\infty}\frac{w_{1}^{n_{1}}\dots w_{r}^{n_{2}}}{n_{1}!\dots n_{r}!}\prod_{k=1}^{r}n_{k}\left(\B_{k}+\V_{1,\dots,k}\right)^{n_{k}-1}=\prod_{k=1}^{r}w_{k}e^{w_{k}\left(\B_{k}+\V_{1,\dots,k}\right)},
\]
where the last factor is expanded as
\[
w_{r}e^{w_{r}\left(\B_{r}+\V_{1,\dots,r}\right)}=e^{w_{r}\B_{r}}\left(e^{w_{r}\left(\B_{r-1}+\V_{1,\dots,r-1}\right)}-1\right)
\]
so that
\begin{eqnarray*}
\G_{r}\left(w_{1},\dots,w_{r};z\right) & = & e^{w_{r}\B_{r}}\left(\prod_{k=1}^{r-2}w_{k}e^{w_{k}\left(\B_{k}+\V_{1,\dots,k}\right)}\right)\\
 &  & \times w_{r-1}\left(e^{\left(w_{r-1}+w_{r}\right)\left(\B_{r-1}+\V_{1,\dots,r-1}\right)}-e^{w_{r-1}\left(\B_{r-1}+\V_{1,\dots,r-1}\right)}\right),
\end{eqnarray*}
which is precisely
\[
\frac{w_{r}}{e^{w_{r}}-1}\left(\frac{w_{r-1}}{w_{r-1}+w_{r}}\G_{r-1}\left(w_{1},\dots,w_{r-1}+w_{r};z\right)-\G_{r-1}\left(w_{1},\dots,w_{r-1};z\right)\right).
\]
This completes the proof.
\end{proof}

\subsection{Connection among the symbols $\beta$, $\B$ and $\Hc$}

The next result provides a connection formula between the symbols $\beta$, $\B$ and $\Hc$.
\begin{thm}
The symbols $\beta$, $\B$ and $\Hc$ are related by
\begin{equation}
\beta_{n_{1},\ldots,n_{r}}(N)=\beta_{n_{1}}(N)B_{n_{2},\ldots,n_{r}}(N)-n_{2}\left(\Hc(N)\right)^{n_{2}-1}B_{n_{3},\ldots,n_{r}}\left(\Hc(N)\right)\beta_{n_{1}}\left(\Hc(N)+1\right).\label{eq:Beta_h}
\end{equation}
\end{thm}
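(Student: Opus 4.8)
The plan is to collapse the $r$-index polynomial on the left to a single sum over one integer variable and then to recognise that sum as a symbolic evaluation at $\Hc(N)$. Since $\beta_{n_1,\ldots,n_r}(z)=B_{n_1,\ldots,n_r}(z)-B_{n_1,\ldots,n_r}(0)$, summing the defining recurrence \eqref{eq:EBP} over $z=0,1,\ldots,N-1$ telescopes to
\[
\beta_{n_1,\ldots,n_r}(N)=n_{1}\sum_{z=0}^{N-1}z^{n_{1}-1}B_{n_{2},\ldots,n_{r}}(z).
\]
This is the identity I would start from; it already exposes the lower-depth polynomial $B_{n_2,\ldots,n_r}$ as the summand, matching the structure of the right-hand side of \eqref{eq:Beta_h}.

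Next I would apply discrete summation by parts. The single-index case of \eqref{eq:EBP} reads $n_{1}z^{n_{1}-1}=\beta_{n_1}(z+1)-\beta_{n_1}(z)$, where $\beta_{n_1}=B_{n_1}-B_{n_1}(0)$ is the ordinary shifted Bernoulli polynomial, so the sum has the form $\sum_{z}\bigl(\beta_{n_1}(z+1)-\beta_{n_1}(z)\bigr)B_{n_2,\ldots,n_r}(z)$. Abel summation produces the boundary contribution $\bigl[\beta_{n_1}(z)B_{n_2,\ldots,n_r}(z)\bigr]_{z=0}^{z=N}$ minus $\sum_{z=0}^{N-1}\beta_{n_1}(z+1)\bigl(B_{n_2,\ldots,n_r}(z+1)-B_{n_2,\ldots,n_r}(z)\bigr)$. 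The lower endpoint drops out because $\beta_{n_1}(0)=0$, and the boundary term is exactly $\beta_{n_1}(N)B_{n_2,\ldots,n_r}(N)$, the first summand in \eqref{eq:Beta_h}. Using \eqref{eq:EBP} a second time to write $B_{n_2,\ldots,n_r}(z+1)-B_{n_2,\ldots,n_r}(z)=n_{2}z^{n_{2}-1}B_{n_{3},\ldots,n_{r}}(z)$, the remaining piece becomes $-n_{2}\sum_{z}\beta_{n_1}(z+1)\,z^{n_{2}-1}B_{n_{3},\ldots,n_{r}}(z)$.

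The last step is to read the summand as a polynomial $P(z)$ in the single variable $z$ and to collapse the sum with the $\Hc$ symbol: from the rule $(\Hc(N))^{k}=H_{-k}(N)=\sum_{z=1}^{N-1}z^{k}$ together with Faulhaber's formula \eqref{eq:Faulhaber}, one has $\sum_{z=1}^{N-1}P(z)=P(\Hc(N))$ for every polynomial $P$. Performing the substitution $z\mapsto\Hc(N)$ throughout — so that the shift $z+1$ becomes $\Hc(N)+1$ — turns this contribution into $-n_{2}(\Hc(N))^{n_{2}-1}B_{n_{3},\ldots,n_{r}}(\Hc(N))\,\beta_{n_1}(\Hc(N)+1)$, which is precisely the second summand of \eqref{eq:Beta_h}.

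The main obstacle I expect is the bookkeeping at the lower summation limit: the Abel sum runs from $z=0$, whereas the $\Hc$ symbol encodes $\sum_{z=1}^{N-1}$, so I must verify that the $z=0$ contribution is inert. For $n_{2}\ge2$ the factor $z^{n_{2}-1}$ kills the $z=0$ term, so the summand has no constant part in $z$ and the substitution rule applies verbatim; the degenerate case $n_{2}=1$ needs the $z=0$ term restored by hand. A secondary, purely notational point is that $B_{n_2,\ldots,n_r}(N)$ and $B_{n_3,\ldots,n_r}(\Hc(N))$ carry the undetermined constants $B_{\bullet}(0)$; these are transported unchanged through the telescoping and the symbolic substitution, so they never have to be pinned down, consistently with the earlier observation that these constant terms are not needed.
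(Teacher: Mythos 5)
Your proof follows the paper's own argument essentially step for step: telescoping the defining recurrence \eqref{eq:EBP}, summation by parts to peel off the boundary term $\beta_{n_1}(N)B_{n_2,\ldots,n_r}(N)$, a second application of the recurrence to the difference $B_{n_2,\ldots,n_r}(z+1)-B_{n_2,\ldots,n_r}(z)$, and a symbolic collapse of the remaining single sum via $\Hc(N)$. The only differences are cosmetic (which side of the Abel identity carries the index shift), and your explicit attention to the $z=0$ and constant-term edge cases is, if anything, more careful than the paper's.
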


\begin{proof}
From the recurrence \eqref{eq:EBP}
\[
B_{n_{1},\ldots,n_{r}}(k+1)=B_{n_{1},\ldots,n_{r}}(k)+n_{1}k^{n_{1}-1}B_{n_{2},\dots,n_{r}}(k),
\]
we deduce, summing over $k$ from $0$ to $N-1,$
\begin{eqnarray*}
\beta_{n_{1},\ldots,n_{r}}(N) & = & B_{n_{1},\ldots,n_{r}}(N)-B_{n_{1},\ldots,n_{r}}(0)\\
 & = & \sum_{k=0}^{N-1}B_{n_{1},\ldots,n_{r}}(k+1)-B_{n_{1},\ldots,n_{r}}(k)\\
 & = & \sum_{k=0}^{N-1}n_{1}k^{n_{1}-1}B_{n_{2},\dots,n_{r}}(k).
\end{eqnarray*}
Using the summation by parts formula
\[
\sum_{k=0}^{N-1}f_{k}g_{k}=f_{N-1}\sum_{k=0}^{N-1}g_{k}-\sum_{j=0}^{N-2}\left(f_{j+1}-f_{j}\right)\sum_{k=0}^{j}g_{k}.
\]
with 
\[
f_{k}=B_{n_{2},\dots,n_{r}}(k)\text{ and }g_{k}=n_{1}k^{n_{1}-1}
\]
we deduce
\begin{eqnarray*}
\beta_{n_{1},\ldots,n_{r}}(N) & = & B_{n_{2},\dots,n_{r}}(N-1)\sum_{k=0}^{N-1}n_{1}k^{n_{1}-1}-\sum_{j=0}^{N-2}\left(B_{n_{2},\dots,n_{r}}(j+1)-B_{n_{2},\dots,n_{r}}(j)\right)\sum_{k=0}^{j}n_{1}k^{n_{1}-1}\\
 & = & B_{n_{2},\dots,n_{r}}(N)\sum_{k=0}^{N-1}n_{1}k^{n_{1}-1}-\sum_{j=0}^{N-1}\left(B_{n_{2},\dots,n_{r}}(j+1)-B_{n_{2},\dots,n_{r}}(j)\right)\sum_{k=0}^{j}n_{1}k^{n_{1}-1}\\
 & = & B_{n_{2},\dots,n_{r}}(N)\beta_{n_{1}}(j)-\sum_{j=0}^{N-1}n_{2}j^{n_{2}-1}B_{n_{3},\ldots,n_{r}}(j)\beta_{n_{1}}(j+1).
\end{eqnarray*}
Assume 
\[
B_{n_{3},\ldots,n_{r}}(j)\beta_{n_{1}}(j+1)=\sum_{l=0}^{d}a_{l}j^{l},
\]
so that 
\begin{eqnarray*}
\sum_{j=0}^{N-1}n_{2}j^{n_{2}-1}B_{n_{3},\ldots,n_{r}}(j)\beta_{n_{1}}(j+1) & = & \sum_{l=0}^{d}a_{l}\left(\Hc(N)\right)^{n_{2}+l-1}\\
 & = & n_{2}\left(\Hc(N)\right)^{n_{2}-1}B_{n_{3},\ldots,n_{r}}\left(\Hc(N)\right)\beta_{n_{1}}\left(\Hc(N)+1\right),
\end{eqnarray*}
which completes the proof.
\end{proof}
\begin{rem*}
Since $\beta_{n_{1},\ldots,n_{r}}(z)$ is a polynomial
in $z$, identity \eqref{eq:Beta_h} extends to the case of an arbitrary real number $z\in\mathbb{R}$ as
\[
\beta_{n_{1},\ldots,n_{r}}(z)=\beta_{n_{1}}(z)B_{n_{2},\ldots,n_{r}}(z)-n_{2}\left(\Hc(z)\right)^{n_{2}-1}B_{n_{3},\ldots,n_{r}}\left(\Hc(z)\right)\beta_{n_{1}}\left(\Hc(z)+1\right).
\]
\end{rem*}

\begin{example}
The special case of double-index shifted extended Bernoulli polynomials
with $n=2$ reads
\[
\beta_{2,m}(N)=\beta_{2}(N)B_{m}(N)-m\left(\Hc(N)\right)^{m-1}\beta_{n}\left(\Hc(N)+1\right).
\]
Since
\[
\beta_{2}(x)=B_{2}(x)-B_{2}=x^{2}-x,
\]
we deduce
\begin{eqnarray*}
\beta_{2,m}(N) & = & (N^2-N)B_{m}(N)-m\left(\Hc(N)\right)^{m-1}\left(\left(\Hc(N)+1\right)^{2}-\left(\Hc(N)+1\right)\right)\\
 & = & (N^2-N)B_{m}(N)-m\left(\left(\Hc(N)\right)^{m+1}+\left(\Hc(N)\right)^{m}\right)\\
 & = & (N^2-N)B_{m}(N)-m\left(H_{-m-1}(N)+H_{-m}(N)\right),
\end{eqnarray*}
so that
\[
\beta_{2,m}(N)=2\sum_{k=0}^{N-1}kB_{m}(k)=(N^2-N)B_{m}(N)-m\left(H_{-m-1}(N)+H_{-m}(N)\right).
\]
\end{example}

\section{Analytic Continuation}
Having obtained explicit or recurrence expressions for the finite case $N \in \mathbb{N}$, we shall study the limiting behavior of the harmonic  sum
$$\sum_{N>i_{1}>\cdots>i_{r}>0}i_{1}^{n_{1}}\cdots i_{r}^{n_{r}}$$
as $N \to \infty$. The series is obviously divergent in this limit, but deducing its value is equivalent to evaluating the analytic continuation of the multiple zeta function at negative integers. We introduce two natural methods to assign this divergent series a value: a natural renormalization procedure, and extension to a constant term, and show that they  coincide. In fact, we will then show that they both yield \textit{Raabe's formula} \citep{Raabe, Sadaoui}. For what follows, we will extensively use the shorthand
$$ 
\tilde{n} = \sum_{j=1}^r n_j, \tilde{k} = \sum_{j=2}^r k_j.
$$
Then, given the multiple zeta function 
$$
\zeta({s_1,s_2,\ldots,s_r}) = \sum_{n_1>n_2>\cdots>n_r} \frac{1}{n_1^{s_1}n_2^{s_2}\cdots n_r^{s_r}}.
 $$
its analytic continuation is evaluated in \citep[eq.~13]{Raabe} (with typos in the indices of the $B_{k_j}$ corrected, and the prefactor of $1/(n_r+1)$ correctly omitted), as the $(r-1)$ fold sum
\begin{align}\label{Raabe:explicit}
 \zeta(-n_1,-n_2,\ldots, -n_r) = {(-1)^{\tilde{n}}}\sum_{k_2,k_3,\ldots,k_r} \frac{B_{\tilde{n}+r-\tilde{k}}}{\tilde{n}+r-\tilde{k}} &\prod_{j=2}^r   \binom{ \sum_{i=j}^r n_i + r - j + 1 - \sum_{i=j+1}^r k_i }{k_j} \\
&\times\frac{B_{k_j}}{\sum_{i=j}^r n_i + r - j + 1 - \sum_{i=j+1}^r k_i }. \nonumber
\end{align}

\subsection{Renormalization}
For what follows, notice that, for $k\geq 2,$ the $\Hc$ symbols satisfy the evaluation rule
\begin{align*}
\Hc_{1,2,\ldots,k}^{n_k} &= (\B_k + \V_{1,\ldots,k})^{n_k} \\
&=  (\B_k + \V(\B_{k-1}+V_{1,\ldots,k-1}))^{n_k} \\
&= \int_0^{\B_{k-1}+V_{1,\ldots,k-1}} (\B_k+t)^{n_k} dt \\
&= \frac{ (\B_k+\B_{k-1} +\V_{1,\ldots,k-1})^{n_k+1} - \B_k^{n_k+1} }{ n_k+1 } \\
&= \frac{ (\B_k+\Hc_{1,\ldots,k-1})^{n_k+1} - B_{n_k+1} }{ n_k+1 }.
\end{align*}
For $k=1$, they are evaluated as
$$\Hc_1^{n} = \frac{(\B_1+N)^{n+1} - B_{n+1}}{n+1}.$$
\begin{thm}
Define the following renormalization rules for the symbol $\Hc_{1,\ldots,k}(\infty)$:
\begin{itemize}
\item for $k \geq 2$, define recursively $$\Hc_{1,2,\ldots,k}^{n_k}(\infty) =  \frac{ (\B_k+\Hc_{1,\ldots,k-1}(\infty))^{n_k+1} }{ n_k+1 };$$
\item for $k=1$, define $$\Hc_1^n(\infty) =\frac{B_{n+1}}{n+1}. $$ 
\end{itemize}
Then
\[
H_{-n_1,\ldots,-n_r}(\infty):=\prod_{k=1}^r \Hc_{1,\ldots,k}^{n_k}(\infty) = (-1)^{\tilde{n}} \zeta(-n_1,\ldots,-n_r),
\]
which, up to sign, is the value of $\zeta(-n_1,\ldots,-n_r)$ given by Raabe's analytic continuation in \citep{Raabe}.
\end{thm}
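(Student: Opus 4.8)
The plan is to evaluate the renormalized product $\prod_{k=1}^r \Hc_{1,\ldots,k}^{n_k}(\infty)$ explicitly by unwinding the nested symbols from the innermost index $k=r$ down to $k=1$, and then to match the resulting $(r-1)$-fold sum against Raabe's expression \eqref{Raabe:explicit} through a single change of summation variables. Concretely, I would process the top factor first: using the renormalization rule for $k\ge 2$ together with the binomial theorem,
\[
\Hc_{1,\ldots,r}^{n_r}(\infty)=\frac{\left(\B_r+\Hc_{1,\ldots,r-1}(\infty)\right)^{n_r+1}}{n_r+1}=\frac{1}{n_r+1}\sum_{k_r=0}^{n_r+1}\binom{n_r+1}{k_r}B_{n_r+1-k_r}\,\Hc_{1,\ldots,r-1}^{k_r}(\infty),
\]
where I have used $\B_r^{\,m}=B_m$ and the independence of $\B_r$ from the symbols $\B_1,\ldots,\B_{r-1}$ hidden inside $\Hc_{1,\ldots,r-1}(\infty)$. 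The freshly produced power $\Hc_{1,\ldots,r-1}^{k_r}(\infty)$ merges with the pre-existing factor $\Hc_{1,\ldots,r-1}^{n_{r-1}}(\infty)$ into $\Hc_{1,\ldots,r-1}^{\,n_{r-1}+k_r}(\infty)$, so the depth-$r$ product reduces to a depth-$(r-1)$ product with the second-highest index boosted. Iterating this (formally an induction on the depth, whose hypothesis must be stated for a general index vector so that the boosted index $n_{r-1}+k_r$ is covered), I track the running exponents $e_j:=n_j+k_{j+1}$ for $1\le j\le r-1$ and $e_r:=n_r$, and at the bottom level I invoke the rule $\Hc_1^{e_1}(\infty)=B_{e_1+1}/(e_1+1)$.

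This unwinding produces the closed form
\[
H_{-n_1,\ldots,-n_r}(\infty)=\sum_{k_2,\ldots,k_r}\frac{B_{\,n_1+k_2+1}}{n_1+k_2+1}\prod_{j=2}^r\frac{1}{n_j+k_{j+1}+1}\binom{n_j+k_{j+1}+1}{k_j}B_{\,n_j+k_{j+1}+1-k_j},
\]
with the convention $k_{r+1}=0$. The decisive step is then the triangular change of summation variables $k_j\mapsto \kappa_j:=n_j+k_{j+1}+1-k_j$ for $j=r,r-1,\ldots,2$, which is invertible since $k_j=n_j+k_{j+1}+1-\kappa_j$ is solved recursively downward from $j=r$. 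Under this substitution I would verify three matchings: the inner Bernoulli argument $n_j+k_{j+1}+1-k_j$ becomes $\kappa_j$, so $B_{n_j+k_{j+1}+1-k_j}=B_{\kappa_j}$ and $\binom{n_j+k_{j+1}+1}{k_j}=\binom{n_j+k_{j+1}+1}{\kappa_j}$; the denominator $e_j+1=n_j+k_{j+1}+1$ equals Raabe's factor $\sum_{i=j}^r n_i+r-j+1-\sum_{i=j+1}^r\kappa_i$; and the outer Bernoulli index $n_1+k_2+1$ equals $\tilde n+r-\tilde k$. Each of these is a short index manipulation in which the telescoping difference $\sum_{i=j+1}^r k_{i+1}-\sum_{i=j+1}^r k_i$ collapses to a single boundary term $-k_{j+1}$.

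Granting these three identities, the closed form above coincides term-by-term with the sum in \eqref{Raabe:explicit} stripped of its prefactor, so $H_{-n_1,\ldots,-n_r}(\infty)=(-1)^{-\tilde n}\zeta(-n_1,\ldots,-n_r)=(-1)^{\tilde n}\zeta(-n_1,\ldots,-n_r)$; the sign $(-1)^{\tilde n}$ is precisely the prefactor in Raabe's formula, which my product does not generate, and the sign-flipped $k=1$ rule $\Hc_1^n(\infty)=+B_{n+1}/(n+1)$ is exactly what accounts for its absence. A useful sanity check is the depth-two case, where the rules give $H_{-n_1,-n_2}(\infty)=\frac{1}{n_2+1}\sum_k\binom{n_2+1}{k}B_{n_2+1-k}\frac{B_{n_1+k+1}}{n_1+k+1}$ and the substitution $k\mapsto n_2+1-k$ reproduces $(-1)^{n_1+n_2}\zeta(-n_1,-n_2)$. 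I expect the main obstacle to be purely bookkeeping: confirming that the denominator $e_j+1$ matches Raabe's nested index $\sum_{i=j}^r n_i+r-j+1-\sum_{i=j+1}^r\kappa_i$ under the change of variables, since this requires re-expressing $\sum_{i=j+1}^r\kappa_i$ back in the original variables and checking the telescoping, and likewise ensuring the nested summation ranges are respected so that all out-of-range contributions vanish through the binomial coefficients.
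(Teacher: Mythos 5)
Your proof is correct, but it takes a genuinely different route from the paper's. The paper's own proof is essentially one line: it recalls from \citep[Thm.~1]{Raabe} the symbolic identity $\zeta(-n_1,\ldots,-n_r)=(-1)^{\tilde n}\prod_{k=1}^r\C_{1,\ldots,k}^{n_k+1}$, where the $\C$ symbols are defined by $\C_1^n=\B_1^n/n$ and $\C_{1,\ldots,k}^n=(\C_{1,\ldots,k-1}+\B_k)^n/n$, and then simply observes that the renormalization rules force $\Hc_{1,\ldots,k}^{n_k}(\infty)=\C_{1,\ldots,k}^{n_k+1}$, so the two products coincide. You instead unwind the recursion explicitly: peeling off the top factor via the binomial theorem, merging $\Hc_{1,\ldots,r-1}^{k_r}(\infty)$ with $\Hc_{1,\ldots,r-1}^{n_{r-1}}(\infty)$, iterating down to the $k=1$ rule, and then matching the resulting $(r-1)$-fold sum against the explicit expansion \eqref{Raabe:explicit} through the triangular substitution $\kappa_j=n_j+k_{j+1}+1-k_j$. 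I checked your bookkeeping: the inversion $k_j=\sum_{i=j}^r n_i+(r-j+1)-\sum_{i=j}^r\kappa_i$, the identification $n_j+k_{j+1}+1=\sum_{i=j}^r n_i+r-j+1-\sum_{i=j+1}^r\kappa_i$, and the outer index $n_1+k_2+1=\tilde n+r-\tilde\kappa$ all work out, and the sign comes out right. What the paper's argument buys is brevity and a clean structural explanation (the renormalization literally \emph{is} the $\C$-symbol calculus of the earlier work); what your argument buys is self-containedness relative to the explicit formula \eqref{Raabe:explicit} quoted in the paper, at the cost of the change-of-variables bookkeeping, and it effectively re-derives the equivalence between the symbolic and explicit forms of Raabe's continuation rather than citing it.
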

\begin{proof}
Given independent Bernoulli symbols $\{\B_1,\B_2,\ldots,\B_k\}$, define the $\C$ symbols through the recursive rule
$$\C_1^n = \frac{\B_1^n}{n}, \C_{1,2}^n = \frac{(\C_1+\B_2)^n}{n},  \cdots, \C_{1,2,\ldots,k}^n = \frac{(\C_{1,2,\ldots,k-1}+\B_k)^n}{n}.$$
The main result of \citep[Thm.~1]{Raabe} was the symbolic expression
$$\zeta(-n_1,\ldots,-n_r)  = (-1)^{\tilde{n}} \prod_{k=1}^r \C_{1,2,\ldots,k}^{n_k+1}.$$
On the other hand,  the renormalization procedure above indicates that the action of the $\Hc$ and $\C$ symbols exactly coincide; in fact, we have the equality $$\Hc_{1,\ldots,k}^{n_k}(\infty) = \C_{1,\ldots,k}^{n_k+1},$$ 
which completes the proof.
\end{proof}
Heuristically, at depth $k=1$ we discard the divergent contribution from the $N$ term and keep the constant term, while at depth $k\geq 2$ we discard the constant term. This is yet another appearance of Raabe's analytic continuation, from a natural renormalization procedure.

\subsection{Constant Term Interpretation}
We begin by studying the depth one case. There, Faulhaber's formula reads
$$  H_n(N) = (\B_1+\V_1)^n = \sum_{k=0}^{n} \binom{n+1}{k}\frac{B_k}{n+1}N^{n+1-k}. $$
Note that there is \textit{no constant term}, as expected, since $H_n^-(0)=0$ trivially. However, if such a constant term were to exist, we could evaluate it by putting $k=n+1$ in the summand, giving 
$$\binom{n+1}{n+1} \frac{B_{n+1}}{n+1}  = \frac{B_{n+1}}{n+1}  = \zeta(-n)(-1)^n.$$

We now explicitly describe the action of the $\Hc$ symbols, noting that this is a case study in how useful symbols are, since they enable the easy manipulation of multiple sums as those that follow.
\begin{thm}
We have the explicit expansion, as a polynomial in $N$,
\begin{align}\label{Faulhaber:explicit}
H_{-n_1,\ldots,-n_r}(N) &= \sum_{k_r=0}^{n_r}\thinspace\thinspace\sum_{k_{r-1}=0}^{n_r + n_{r-1} +1-k_r }\thinspace\thinspace\sum_{k_{r-2}=0}^{n_r + n_{r-1} +n_{r-2}+2-k_r-k_{r-1} } \cdots \sum_{k_1=0}^{  \sum_{i=1}^r n_i + (r-1) -\sum_{i=2}^rk_i} \\
&\medspace\medspace\medspace\medspace\medspace\medspace\prod_{j=1}^r \binom{   \sum_{i=j}^r n_i + (r-j+1) -\sum_{i=j+1}^rk_i}{ k_j} \frac{B_{k_j}}{ \sum_{i=j}^r n_i + (r-j+1) -\sum_{i=j+1}^rk_i} \nonumber\\
&\medspace\medspace\medspace\medspace\medspace\medspace\times N^{ \sum_{i=1}^r n_i + r -\sum_{i=1}^rk_i} \nonumber.
\end{align}
\end{thm}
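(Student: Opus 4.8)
The plan is to iterate the single-index Faulhaber formula $r$ times, carefully tracking how the exponent of $N$ and the accumulated binomial/Bernoulli factors evolve at each nesting level. The starting point is the product representation $H_{-n_1,\ldots,-n_r}(N) = \prod_{k=1}^r \Hc_{1,\ldots,k}^{n_k}$ from Theorem~\ref{thm:Hgeneralcase}, together with the evaluation rules for the $\Hc$ symbols recalled just before the Renormalization subsection. The key observation is that each symbol $\Hc_{1,\ldots,k}^{n_k}$ expands, via Faulhaber's formula \eqref{eq:Faulhaber}, into a sum over an index $k_k$ of a binomial times a Bernoulli number times a power of the \emph{next inner symbol} $\Hc_{1,\ldots,k-1}$ (or of $N$ when $k=1$). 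So expanding the outermost symbol first raises the effective exponent on $\Hc_{1,\ldots,r-1}$, which I then expand, and so on down to $\Hc_1 = \B_1 + N$.

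First I would set up the induction by expanding $\Hc_{1,\ldots,r}^{n_r}$ using the $k\ge 2$ rule, producing the summation variable $k_r$ ranging from $0$ to $n_r$ and a factor $\binom{n_r+1}{k_r}\frac{B_{k_r}}{n_r+1}(\B_r + \Hc_{1,\ldots,r-1})^{n_r+1-k_r}$; here I absorb the $1/(n_r+1)$ together with $\binom{n_r+1}{k_r}$ into the form $\binom{\cdots}{k_r}\frac{B_{k_r}}{\cdots}$ that matches the claimed summand. The crucial bookkeeping is that the new power on the inner symbol is $n_r + 1 - k_r$, which, when fed into the next expansion, makes the upper summation limit for $k_{r-1}$ equal to $n_r + n_{r-1} + 1 - k_r$ — precisely the limit appearing in \eqref{Faulhaber:explicit}. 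I would formalize this as an inductive claim: after peeling off the top $r-j$ symbols, the remaining object is $\Hc_{1,\ldots,j}$ raised to the power $\sum_{i=j+1}^r n_i + (r-j) - \sum_{i=j+1}^r k_i$, multiplied by the partial product $\prod_{i=j+1}^r(\cdots)$ of the factors already generated, with the accumulated powers of $\B_{j+1},\ldots,\B_r$ understood to be resolved into Bernoulli numbers at the end.

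The main obstacle is \emph{verifying that the exponent arithmetic is exactly self-consistent} across all $r$ levels — that is, confirming that the upper limits of summation and the arguments of every binomial and Bernoulli factor telescope into the closed forms $\sum_{i=j}^r n_i + (r-j+1) - \sum_{i=j+1}^r k_i$ stated in the theorem, and that the final exponent of $N$ comes out to $\sum_{i=1}^r n_i + r - \sum_{i=1}^r k_i$. This is delicate but purely mechanical: at each step the power on the inner symbol increments the running total by $1$ (from the $+1$ in Faulhaber's formula) and decrements it by the new summation index, so the constants $(r-j+1)$ accumulate one unit per level. The base case $k=1$ uses the rule $\Hc_1^m = \frac{(\B_1+N)^{m+1}-B_{m+1}}{m+1}$ and extracts the powers of $N$ by a final binomial expansion of $(\B_1+N)^{m+1}$, which produces the innermost index $k_1$ and the stated $N$-exponent. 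Once the telescoping indices are pinned down, collecting the product of all generated factors yields \eqref{Faulhaber:explicit} directly.

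One point requiring care is the treatment of the independent Bernoulli symbols $\B_2,\ldots,\B_r$: after all expansions, each $\B_k^{k_k}$ must be evaluated as the Bernoulli number $B_{k_k}$, which is legitimate precisely because the symbols are \emph{independent} in the sense defined in Section~\ref{sec:Symbols}, so $\prod_k \B_k^{k_k} = \prod_k B_{k_k}$. I would therefore defer this evaluation to the very end, carrying the symbols formally through the induction and replacing each $\B_k^{k_k}$ by $B_{k_k}$ only after the product structure is fully assembled.
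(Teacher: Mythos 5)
Your proposal is correct and follows essentially the same route as the paper: both rest on the product representation $H_{-n_1,\ldots,-n_r}(N)=\prod_k \Hc_{1,\ldots,k}^{n_k}$ and iterate the Faulhaber expansion of the nested $\Hc$ symbols one level at a time, with identical bookkeeping of the exponents $\sum_{i=j}^r n_i + (r-j+1) - \sum_{i=j+1}^r k_i$ (the paper phrases this as induction on the depth by prepending the outermost index and expanding the final $(\B_1+\V_1)$ power, which is just your peeling argument unrolled in the opposite direction). Your closing remark about deferring the evaluation $\B_k^{k_k}\mapsto B_{k_k}$ until the independent symbols are fully separated is a point the paper leaves implicit, and it is handled correctly.
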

\begin{proof}
We prove by induction on the depth $r$. For $r=1$, this is precisely Faulhaber's formula. Now assume that the formula holds for $r$, and use the inductive hypothesis to write
\begin{align*}
&H_{-n_{1},\dots,-n_{r},-n_{r+1}}(N)=(\B_1+\V_1)^{n_1}\prod_{k=2}^{r+1}\Hc_{1,\ldots,k}^{n_{k}} \\
=&(\B_1+\V_1)^{n_1}\times  \sum_{k_{r+1}=0}^{n_{r+1}}\thinspace\thinspace\sum_{k_{r}=0}^{n_{r+1} + n_{r} +1-k_{r+1} }\thinspace\thinspace\sum_{k_{r-1}=0}^{n_{r+1} + n_{r} +n_{r-1}+2-k_{r+1}-k_{r} } \cdots \sum_{k_2=0}^{  \sum_{i=2}^{r+1} n_i + (r-1) -\sum_{i=3}^{r+1} k_i} \allowdisplaybreaks \\
&\times\prod_{j=2}^{r+1} \binom{   \sum_{i=j}^{r+1} n_i + (r-j+2) -\sum_{i=j+1}^{r+1} k_i}{ k_j} \frac{B_{k_j}}{ \sum_{i=j}^{r+1} n_i + (r-j+2) -\sum_{i=j+1}^{r+1} k_i} \\
&\times (\B_1+\V_1)^{ \sum_{i=2}^{r+1} n_i + r -\sum_{i=2}^{r+1}k_i}.
\end{align*}
We then multiply together both occurrences of $(\B_1+\V_1)$ and expand 
$$(\B_1+\V_1)^{n} = \sum_{k_1=0}^n \binom{n+1}{k_1} \frac{1}{n+1}B_lN^{n+1-l}$$
with $n =  \sum_{i=1}^{r+1} n_i + r -\sum_{i=2}^{r+1}k_i $. This provides the innermost $k_1$ summation and the $j=1$ term in the product, completing the inductive proof.
\end{proof}

Note that there is no constant term in \eqref{Faulhaber:explicit}, since $H_{n_1,\ldots,n_r}(0)=0$.
\begin{thm}
If we instead sum over all tuples $(k_r,k_{r-1},\ldots, k_1)$ which would give a constant coefficient  in \eqref{Faulhaber:explicit}, we obtain Raabe's analytic continuation $(-1)^{\tilde{n}}\zeta(-n_1,\ldots,-n_r)$.
\end{thm}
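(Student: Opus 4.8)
The plan is to read the constant coefficient directly off the explicit expansion \eqref{Faulhaber:explicit} and identify it term-by-term with the right-hand side of Raabe's formula \eqref{Raabe:explicit}. First I would isolate the dependence on $N$: in \eqref{Faulhaber:explicit} the variable $N$ occurs only through the factor $N^{\sum_{i=1}^r n_i + r - \sum_{i=1}^r k_i}$, so the summand indexed by $(k_1,\dots,k_r)$ contributes to the constant coefficient exactly when $\sum_{i=1}^r k_i = \tilde n + r$. Because \eqref{Faulhaber:explicit} is built so that each index $k_j$ stops one unit below the top entry of its own binomial coefficient, no tuple in the stated ranges satisfies this (the already-noted absence of a constant term); the tuples that \emph{would} produce a constant coefficient are obtained by extending every $k_j$ up to the top of its binomial coefficient and then imposing $\sum_i k_i = \tilde n + r$.

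The key algebraic point is that this single constraint forces $k_1$ to coincide with the top entry of its binomial coefficient. Indeed the $j=1$ binomial in \eqref{Faulhaber:explicit} has top entry $\sum_{i=1}^r n_i + r - \sum_{i=2}^r k_i = \tilde n + r - \tilde k$, while the constant-coefficient condition reads $k_1 = \tilde n + r - \tilde k$; substituting this value gives $\binom{\tilde n + r - \tilde k}{\tilde n + r - \tilde k} = 1$, so the entire $j=1$ factor collapses to
\[
\frac{B_{\tilde n + r - \tilde k}}{\tilde n + r - \tilde k},
\]
which is precisely the prefactor in \eqref{Raabe:explicit}.

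After eliminating $k_1$, the surviving indices $(k_2,\dots,k_r)$ run over the tops of their binomial coefficients, and the remaining product $\prod_{j=2}^r$ in \eqref{Faulhaber:explicit} is term-for-term identical to $\prod_{j=2}^r$ in \eqref{Raabe:explicit}, since both carry the same top entry and denominator $\sum_{i=j}^r n_i + (r-j+1) - \sum_{i=j+1}^r k_i$. The constant-coefficient sum therefore reproduces the full sum on the right of \eqref{Raabe:explicit}; as that formula carries an overall factor $(-1)^{\tilde n}$, the sum equals $(-1)^{\tilde n}\zeta(-n_1,\dots,-n_r)$, as claimed. At $r=1$ this degenerates correctly: the single forced value $k_1 = n_1+1$ returns $B_{n_1+1}/(n_1+1) = (-1)^{n_1}\zeta(-n_1)$, recovering the depth-one discussion.

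The step I expect to be delicate is fixing the correct reading of ``tuples which would give a constant coefficient,'' namely that \emph{all} $r$ summation ranges, not merely the innermost, must be extended up to the tops of their binomial coefficients. Extending only the $k_1$ range while keeping $k_2,\dots,k_r$ in the reduced ranges of \eqref{Faulhaber:explicit} drops exactly the boundary contributions in which some $k_j$ with $j\ge 2$ attains its binomial top, and these are generally nonzero; for $r=2$, $n_1=n_2=1$ this omission would replace the correct value $\zeta(-1,-1)=1/360$ by $-1/240$. The substantive content of the proof is thus the verification that these extended ranges agree with the summation ranges implicit in \eqref{Raabe:explicit}, after which the identification above is routine index bookkeeping.
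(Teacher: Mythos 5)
Your proof is correct and follows essentially the same route as the paper: impose $\sum_{i=1}^r k_i = \tilde n + r$, observe that the $j=1$ binomial collapses to $\binom{k_1}{k_1}=1$ so that factor becomes $B_{\tilde n + r - \tilde k}/(\tilde n + r - \tilde k)$, eliminate $k_1$, and match the remaining product term-for-term with \eqref{Raabe:explicit}. Your added remark that \emph{all} summation ranges (not just $k_1$'s) must be extended to the tops of their binomial coefficients is a worthwhile clarification of a point the paper leaves implicit, but it does not change the argument.
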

\begin{proof}
 This constant term condition over the summation set translates into the equivalent condition $ \sum_{i=1}^r n_i + r =\sum_{i=1}^rk_i$, giving the sum
\begin{align*}
\sum_{\substack{ k_1,k_2,\cdots,k_r \\  \sum_{i=1}^r n_i + r =\sum_{i=1}^rk_i}}\prod_{j=1}^r \binom{   \sum_{i=j}^r n_i + (r-j+1) -\sum_{i=j+1}^rk_i}{ k_j} \frac{B_{k_j}}{ \sum_{i=j}^r n_i + (r-j+1) -\sum_{i=j+1}^rk_i}.
\end{align*}
Since $ \sum_{i=1}^r n_i + r =\sum_{i=1}^rk_i$, the binomial coefficient at $j=1$ reduces to $\binom{k_1}{k_1} = 1$. We can also omit the summation over $k_1$ by writing $k_1 =  \sum_{i=1}^r n_i + r -\sum_{i=2}^rk_i$, yielding
\[
\sum_{{k_2,\cdots,k_r}}\prod_{j=2}^r \binom{ \displaystyle\sum_{i=j}^r n_i + (r-j+1) -\sum_{i=j+1}^rk_i}{ k_j} \frac{B_{k_j}}{\displaystyle \sum_{i=j}^r n_i + (r-j+1) -\sum_{i=j+1}^rk_i}\cdot\frac{B_{ \sum_{i=1}^r n_i + r -\sum_{i=2}^rk_i  }}{ \displaystyle\sum_{i=1}^r n_i + r -\sum_{i=2}^rk_i}.
\]
By comparing with the explicit expansion \eqref{Raabe:explicit}, we see this is exactly Raabe's continuation, which completes the proof.
\end{proof}

\begin{example}
For depth two, the extended Faulhaber formula reads 
$$H_{n,m}(N) = \sum_{k=0}^m\sum_{j=0}^{n+m+1-k} \binom{m+1}{k} \frac{B_k}{m+1} \binom{n+m+2-k}{j} \frac{B_j}{n+m+2-k}N^{n+m+2-k-j}. $$
Again we want to extend this to a constant term, so set $n+m+2-k-j=0$ on the summand, giving
$$ \binom{m+1}{k} \frac{B_k}{m+1} \binom{n+m+2-k}{j} \frac{B_j}{n+m+2-k} =  \binom{m+1}{k} \frac{B_k}{m+1} \frac{B_j}{j}.$$
Hence the ``constant term'' is 
$$\sum_{k+j=2+m+n}\binom{m+1}{k} \frac{B_k}{m+1} \frac{B_j}{j} $$
Now Raabe's analytic continuation for depth $2$ reads $$(-1)^{m+n}\zeta({-n,-m}) =  \sum_{k=0}^{m+1}  \binom{m+1}{k} \frac{B_k}{m+1} \frac{B_{n+m+2-k}}{n+m+2-k},$$
as desired.
\end{example}

\section{Acknowledgment}

The first author was partially supported by the Austrian Science Fund (FWF) grant
SFB F50 (F5006-N15 and F5009-N15)

\section*{References}

\bibliography{faulhaber}

%
%
%
%
%
%
%
%
%
%
%

\end{document}